\documentclass{amsart}
\usepackage{lmodern}
\usepackage[margin=1in]{geometry}
\usepackage{barz,epigraph}
\author{Michael Barz}

\usetikzlibrary{arrows.meta, decorations.markings}
\DeclareMathOperator{\can}{can}

\DeclareMathOperator{\Dol}{Dol}
\DeclareMathOperator{\GM}{GM}
\renewcommand{\hat}{\widehat}
\DeclareMathOperator{\Hodge}{Hodge}

\title{Non-abelian \(p\)-curvature and a non-abelian Katz's formula}

\begin{document}

\maketitle

\begin{abstract}
		Let \(k\) be a field of characteristic \(p,\) and \(f : X \to S\) a smooth proper morphism of smooth \(k\)-schemes. \emph{Katz's formula} \cite{Katz1972} gives a relationship between the Kodaira--Spencer map of \(f\), and an invariant called the \emph{\(p\)-curvature} of the Gauss--Manin connection associated to \(f.\) Recently, Lam--Litt \cite{lamlitt} proved a variant of Katz's formula in non-abelian Hodge theory, and suggested that it should be possible to give a more conceptual proof of their formula using the stacky approach to \(p\)-adic Hodge theory. 

		In this article, we realize their suggestion, explaining how this rather concrete phenomena observed by Katz \cite{Katz1972} and Lam--Litt \cite{lamlitt} can be explained in a conceptual way using \emph{sheared de Rham stacks}, as developed by Bhatt--Kanaev--Vologodsky--Zhang \cite{bkmvz} and Drinfeld \cite{shearedwitt} (though we prove a slightly different statement than Lam--Litt \cite{lamlitt}). We do not assume the reader has any background in the theory of de Rham stacks, and along the way we give some general constructions related to non-abelian connections which may be of use in other contexts. 
\end{abstract}
\setcounter{tocdepth}{1}
\tableofcontents
\epigraph{But no matter how advanced the system, no matter how precise, unless we have the will to communicate, there's no connection.}{\emph{Dance Dance Dance}, Murakami}

\section{Introduction}

Let \(k\) be a field of characteristic \(p,\) and \(f : X \to S\) a smooth proper morphism of smooth \(k\)-schemes. In this situation, one can consider the relative de Rham cohomology \(\mathcal{H}^n_{\dR}(X/S)\) (viewed as a vector bundle on \(S\)).

This vector bundle \(\mathcal{H}^n_{\dR}(X/S)\) admits much additional structure. For example, it famously admits the \emph{Gauss--Manin connection} 
\[\nabla_{\GM} : \mathcal{H}^n_{\dR}(X/S) \to \Omega^1_{S} \otimes \mathcal{H}^n_{\dR}(X/S),\]
a certain flat connection playing an important role in the study of periods.

The relative de Rham cohomology \(\mathcal{H}^n_{\dR}(X/S)\) also carries \emph{two} natural filtrations, the Hodge filtration and the conjugate filtration. This conjugate filtration is specific to characteristic \(p,\) and we will discuss it in \autoref{sec4}. Katz, in his work \cite{Katz1972} on the Grothendieck--Katz \(p\)-curvature conjecture, discovered a remarkable relationship between how \(\nabla_{\GM}\) acts on the associated gradeds of these two filtrations; more precisely, Katz's formula \cite{Katz1972} relates the \emph{Kodaira--Spencer class} of \(f\) with the \emph{\(p\)-curvature} of \(\nabla_{\GM}.\) Katz's formula plays a crucial role in his proof \cite{Katz1972} of the Grothendieck--Katz \(p\)-curvature conjecture for Gauss--Manin connections.

In this note, we will explain a simple proof of a \emph{non-abelian} variant of Katz's formula. This was inspired by recent work of Lam--Litt \cite{lamlitt}, who proved a non-abelian variant of the Grothendieck--Katz \(p\)-curvature conjecture using (amongst other tools) a non-abelian Katz's formula. Lam--Litt \cite{lamlitt}, in their Remark 3.8.2, suggested that their version of Katz's formula should follow from the theory of \emph{syntomification}; this suggestion inspired our article. 
\begin{remark}
		The non-abelian variant of the Grothendieck--Katz \(p\)-curvature conjecture is generally called the Ekedahl--Shepherd-Barron--Taylor conjecture; this conjecture first appeared in print in Bost's article \cite{bost}. 
\end{remark}

The main novelty of this paper is that, by systematically using the sheared de Rham stack of Bhatt--Kanaev--Mathew--Vologodsky--Zhang \cite{bkmvz} and Drinfeld \cite{shearedwitt}, we can give an especially simple proof of the non-abelian Katz's formula, and simplify the construction of the objects appearing in the formula.

\begin{remark}
		In this note, we work entirely over \(k.\) So, to us a \emph{prestack} will mean a functor \(\Alg_k \to \Gpd,\) and a \emph{stack} will be a prestack obeying fpqc descent.
\end{remark}

\subsection{Non-abelian connections}

As before, let \(S\) be a smooth \(k\)-scheme. In this situation, one can define the \emph{sheared de Rham stack} \((S/k)^{\hat{\dR}}.\) This is a stack (by which we mean fpqc sheaf \(\Alg_k\to\Gpd\)) defined by Bhatt--Mathew--Kanaev--Vologodsky--Zhang \cite{bkmvz} and Drinfeld \cite{shearedwitt} (based on earlier work of Bhatt--Lurie \cite{bhattlurie}, Drinfeld \cite{prismatization}, and Simpson \cite{simpson}) having the remarkable property that vector bundles on \((S/k)^{\hat{\dR}}\) are the same as vector bundles \emph{with flat connection} on \(S.\) 

\begin{remark}
		The notation \((S/k)^{\hat{\dR}}\) indicates that we are taking the de Rham stack \emph{relative to \(k\)}; recall that a flat connection on a vector bundle \(\mathcal{E}\) on \(S\) is a map
		\[\nabla : \mathcal{E} \to \Omega^1_{S/k} \otimes \mathcal{E}\]
		obeying certain axioms. The \(k\) in \((S/k)^{\hat{\dR}}\) is the same as the \(k\) in \(\Omega^1_{S/k}.\) 
\end{remark}
\begin{remark}
		There is a `non-sheared' variant of the sheared de Rham stack, discovered first, which only connections with nilpotent \(p\)-curvature. We will not be using this non-sheared variant anywhere in this note.
\end{remark}

One of the basic insights of Simpson is to try and study non-linear differential equations using \emph{stacks over \((S/k)^{\hat{\dR}}\)} instead of studying quasicoherent sheaves on \((S/k)^{\hat{\dR}}.\) 
\begin{defn} \label{defn65}
		Let \(E\) be a prestack with a morphism \(E \to S.\) A \emph{non-abelian connection} on \(E/S\) is a prestack \(E'\) together with a Cartesian diagram
		\begin{center}
				\begin{tikzcd}
						E \ar[r] \ar[d] & E' \ar[d] \\
						S \ar[r] & (S/k)^{\hat{\dR}}.
				\end{tikzcd}
		\end{center}
\end{defn}	

\subsection{The non-abelian Gauss--Manin connection} 

Let \(f : X \to S\) be a smooth proper morphism of smooth \(k\)-schemes, as before. 

We now introduce the non-abelian analogue of the Gauss--Manin connection. To start, we replace the relative de Rham cohomology \(\mathcal{H}^q_{\dR}(X/S)\) (a vector bundle on \(S\)) with the de Rham moduli stack of \(X\) over \(S.\) 
\begin{defn}
		Fix \(n\geq 1.\) We define \(\mathcal{M}_{\dR}(X/S, n)\) to be the mapping stack \(\Map_S((X/S)^{\hat{\dR}}, B\SL_n).\) In other words, \(\mathcal{M}_{\dR}(X/S, n)\) is the \(S\)-stack such that, for \(T \to S\) an \(S\)-scheme, a \(T\)-valued point of \(\mathcal{M}_{\dR}(X/S, n)\) is just a rank \(n\) vector bundle \(\mathcal{E}\) on \(X\times_S T,\) together with a flat connection 
		\[\nabla : \mathcal{E} \to \Omega^1_{(X\times_S T)/T} \otimes \mathcal{E}\]
		and an isomorphism \(\xi : \det(\mathcal{E}) \heq \OO_{X\times_S T}\) \emph{which is horizontal} (that is, it intertwines the connection \(\det(\nabla)\) on \(\det\mathcal{E}\) with the trivial connetion \(d\) on \(\OO\)). 
\end{defn}
\begin{remark}
		We study \(\SL_n\)-bundles rather than \(\GL_n\)-bundles to better match with Lam--Litt \cite{lamlitt}, though for Katz's formula the difference between \(\SL_n\) and \(\GL_n\) does not seem to matter.
\end{remark}

In the body of the paper, we will show how to use de Rham stacks to construct a non-abelian connection on \(\mathcal{M}_{\dR}(X/S, n).\) This connection is furnished by the Cartesian diagram
\begin{center}
		\begin{tikzcd}
				\mathcal{M}_{\dR}(X/S, n) \ar[r] \ar[d] & \Map_{(S/k)^{\hat{\dR}}}((X/k)^{\hat{\dR}}, B\SL_n) \ar[d] \\
				S \ar[r] & (S/k)^{\hat{\dR}}.
		\end{tikzcd}
\end{center}
as we discuss in \autoref{sec3}.

In the body of this paper, we will explain how to construct non-abelian variants of the Hodge and conjugate filtrations on the non-abelian Gauss--Manin connection. 

We can now, somewhat vaguely, state our main theorem; it will be stated more precisely in \autoref{katz}.
\begin{theorem}[Katz's formula, imprecise statement]
		The non-abelian Hodge filtration on \(\mathcal{M}_{\dR}(X/S, n)\) gives rise to a non-abelian Higgs field 
		\[\Theta_{X/S} : \pi^*_{\operatorname{Dol}}T_{S/k}(-1) \to \mathcal{T}_{\mathcal{M}_{\operatorname{Dol}}/(S \times B\G_m)},\] 
		where \(\mathcal{M}_{\operatorname{Dol}} := \Map_{S\times B\G_m}((X/S)^{\operatorname{Hodge}}, B\SL_n),\) with \(\pi_{\operatorname{Dol}} : \mathcal{M}_{\operatorname{Dol}} \to S\times B\G_m\) the structure map  Roughly, this \(\Theta_{X/S}\) is obtained by taking the associated graded of the non-abelian Hodge filtration. 

		Let \(S'\) denote the Frobenius twist of \(S\) relative to \(k.\) Then the non-abelian conjugate filtration on \(\mathcal{M}_{\dR}(X/S, n)\) similarly gives rise to a certain stack \(\pi_{\operatorname{Dol, c}} : \mathcal{M}_{\operatorname{Dol, c}} \to S \times B\G_m,\) analogous to the Dolbeault moduli stack, and a morphism  
		\[\psi_{X/S}|_{\lambda = 0} : \pi_{\operatorname{Dol, c}}^*F_{S/k}^*T_{S'/k}(+1) \to \mathcal{T}_{\mathcal{M}_{\operatorname{Dol, c}}/(S\times B\G_m)}.\] 

		There is a Cartesian diagram
		\begin{center}
				\begin{tikzcd}[column sep=large]
						\mathcal{M}_{\operatorname{Dol, c}} \ar[r, "\phi"] \ar[d] & \mathcal{M}_{\operatorname{Dol}} \ar[d] \\
						S \times B\G_m \ar[r, "{(F_{\abs}, [-1])}"] & S \times B\G_m,
				\end{tikzcd}
		\end{center}
		and for the map \(\phi : \mathcal{M}_{\Dol, c} \to \mathcal{M}_{\Dol}\) in this diagram, we have
		\[\psi_{X/S}|_{\lambda = 0} = \phi^*\Theta_{X/S}.\] 
\end{theorem}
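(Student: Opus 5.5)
Here is the plan I would follow. Everything is done at the level of stacks over \(\mathbb{A}^1/\G_m\). Both the Hodge and the conjugate filtrations on \(\mathcal{M}_{\dR}(X/S,n)\) come from filtered degenerations of de Rham stacks: the Hodge-filtered (sheared) de Rham stack \((X/k)^{\hat{\dR},+}\to \mathbb{A}^1/\G_m\) and its conjugate-filtered analogue.

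\textbf{Step 1: produce both families.} Mapping into \(B\SL_n\) relative to the corresponding degenerations of \((S/k)^{\hat{\dR}}\) gives two stacks over \(\mathbb{A}^1/\G_m\). Each is equipped with a filtered non-abelian connection in the sense of \autoref{defn65}: a Cartesian square whose bottom row is \(S\times\mathbb{A}^1/\G_m\to (S/k)^{\hat{\dR},\mathrm{filt}}\).

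\textbf{Step 2: identify the special fibres.}
\begin{itemize}
\item \emph{Hodge side.} Over \(\lambda=0\), i.e.\ over \(B\G_m\), the stack \((S/k)^{\hat{\dR}}\) degenerates to the Hodge stack \((S/k)^{\Hodge}=BT_{S/k}^\sharp(-1)\) over \(S\times B\G_m\) (sheared version: the classifying stack of the twisted tangent bundle). A non-abelian connection then degenerates to an action of \(T_{S/k}(-1)\) on the fibre \(\mathcal{M}_{\Dol}\). Linearizing this action gives \(\Theta_{X/S}\), as a map to the relative tangent complex.
\item \emph{Conjugate side.} Over \(\lambda=0\) the conjugate degeneration gives \(B(F_{S/k}^*T_{S'/k}(+1))\). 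Here I use the sheared description of \((S/k)^{\hat{\dR}}\) as a gerbe over \(S'\) banded by \(F^*T_{S'}^\sharp\), together with its conjugate filtration. This yields \(\psi_{X/S}|_{\lambda=0}\), the non-abelian analogue of \(p\)-curvature.
\end{itemize}

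\textbf{Step 3: the Cartesian square.} The key geometric input is an identification of the conjugate-graded piece of \((X/k)^{\hat{\dR}}\) relative to \(S\) with the pullback of \((X/S)^{\Hodge}\). The pullback is along the absolute Frobenius of \(S\) combined with inversion \([-1]\) on \(B\G_m\), the latter accounting for the sign switch \((-1)\) versus \((+1)\) of the twists. This is the stacky incarnation of the Cartier isomorphism / Deligne--Illusie: the relative conjugate filtration on \(\mathcal{H}_{\dR}\) has graded pieces Frobenius-pulled-back from Hodge cohomology.

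Applying \(\Map(-,B\SL_n)\), which commutes with base change since \(X/S\) is proper and smooth, gives \(\phi\) and the Cartesian square of the statement.

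\textbf{Step 4: compare the two group actions.} It remains to show that, under the identification of Step 3, the action of \(F^*T_{S'}(+1)\) on \(\mathcal{M}_{\Dol,c}\) is the Frobenius pullback of the \(T_S(-1)\) action on \(\mathcal{M}_{\Dol}\). Concretely, the extension class (Kodaira--Spencer class) of the Hodge degeneration of \((X/k)\to(S/k)\) must match the conjugate extension class pulled back along Frobenius. Once the two actions agree as group actions, differentiating gives \(\psi_{X/S}|_{\lambda=0}=\phi^*\Theta_{X/S}\), by functoriality of tangent complexes under base change.

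\textbf{Main obstacle.} I expect Step 4 to be hardest, specifically making Step 3 equivariant for the gerbe structures over \(S\) and not just an abstract isomorphism of fibres. I would first attempt to prove this universally, for the map \(\mathbb{A}^1\)-families of the (sheared) de Rham stacks themselves, in the local case where \(S\) is étale over affine space with coordinates. There both extensions can be written via \(W[F]\)/\(\G_a^\sharp\) explicitly using Drinfeld's description, and then one descends by naturality.
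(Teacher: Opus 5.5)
Your architecture matches the paper's: identify the $\lambda=0$ fibres of the two degenerations via $(F_{\abs},[-1])$, apply the mapping stack to get the Cartesian square, then transport the linearized action. There is, however, a genuine gap. The proposal stops exactly where the content is. Step~4 is left as an unspecified local Witt-vector computation followed by ``descend by naturality'', so the equality $\psi_{X/S}|_{\lambda=0}=\phi^*\Theta_{X/S}$ is not actually proved.

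The reason you face Step~4 at all is that in Step~3 you identify only the \emph{relative} fibres, $(X/S)^{\Hodge,c}$ with $(F_{\abs},[-1])^*(X/S)^{\Hodge}$. An abstract isomorphism of these forgets the $\mathbb{V}(T_{S/k}(-1))^{\#,\wedge}$-action, which is where the Kodaira--Spencer class lives. You would then have to recover equivariance by hand.

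The missing idea is to make the identification on the \emph{absolute} universal objects, where it is formal.
\begin{itemize}
\item By Propositions~\ref{prop555} and~\ref{prop750}, $S^{\Hodge}\simeq B_{S\times B\G_m}\mathbb{V}(T_{S/k}(-1))^{\#,\wedge}$ and $S^{\Hodge,c}\simeq B_{S'\times B\G_m}\mathbb{V}(T_{S'/k}(+1))^{\#,\wedge}$ are \emph{canonically split} gerbes. Their splittings and bands come from the split square-zero ring stacks $\G_a\oplus B\G_a^{\#,\wedge}(-1)$ and $F_*\G_a\oplus B(F_*\G_a^{\#,\wedge}(+1))$, so they are functorial in the smooth scheme.
\item Factor $F_{\abs}=\tau\circ F_{S/k}$, with $\tau:S'\to S$ the base change of Frobenius on $k$. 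Keeping the conjugate gerbe over $S'$, not over $S$ as in your Step~2, is what makes the comparison possible.
\item Then $\tau^*T_{S/k}\simeq T_{S'/k}$ gives $(\tau,[-1])^*S^{\Hodge}\simeq S^{\Hodge,c}$, and likewise $(\tau,[-1])^*X^{\Hodge}\simeq X^{\Hodge,c}$.
\item These identifications are compatible with $X^{\Hodge}\to S^{\Hodge}$, which is induced by $df$ on bands. They also carry the section $S\times B\G_m\to S^{\Hodge}$ to the splitting $s$ of Construction~\ref{const802}.
\item Hence the base change along $(\tau,[-1])$ of the \emph{entire} Cartesian square defining $\Theta_{X/S}$ is the square used to define $\alpha$.
\item Construction~\ref{const205} commutes with base change, so $(\tau,[-1])^*\Theta_{X/S}=\alpha$. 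Therefore $\psi_{X/S}|_{\lambda=0}=(F_{S/k},\mathrm{id})^*\alpha=(F_{\abs},[-1])^*\Theta_{X/S}$.
\end{itemize}
The ``extension class matching'' you worried about is then tautological. The sequence $0\to T_{X'/S'}\to T_{X'/k}\to f'^*T_{S'/k}\to 0$ is literally the $\tau$-base change of the one for $X/S$. No coordinates and no Deligne--Illusie input are needed.

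Finally, the Cartesian square for $\phi$ is just $(X/S)^{\Hodge,c}=(F_{\abs},[-1])^*(X/S)^{\Hodge}$ fed into the relative mapping prestack. Base change of that prestack is tautological, so properness of $X/S$ plays no role.
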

\begin{warn}[Comparison to Lam--Litt \cite{lamlitt}]
		Lam--Litt \cite{lamlitt} prove a related result in their Theorem 3.6.4, though their work uses different constructions of the \(\psi_{X/S}|_{\lambda=0}\) and \(\Theta_{X/S}\) than we do, so this result does not immediately reprove their Theorem 3.6.4 unless one also had a comparison between our definitions. 

		Lam--Litt \cite{lamlitt} use their Theorem 3.6.4 as an intermediate result in establishing their `Theorem A,' which they apply to the non-abelian \(p\)-curvature conjecture. It would be interesting to see how much of Theorem A can be reproven using the theory of de Rham stacks, though Lam--Litt use quite different techniques.
\end{warn}
\begin{remark}
		In Lam--Litt's \cite{lamlitt} approach, they first construct an object \(\Psi_{X/S},\) related to our \(\psi_{X/S}\) by the equality \(\Psi_{X/S} = \lambda\psi_{X/S}.\) One convenience to using de Rham stacks is we construct \(\psi_{X/S}\) right away, whereas Lam--Litt \cite{lamlitt} first need to construct \(\Psi_{X/S},\) and then explain why it is sensible to form \(\frac{1}{\lambda}\Psi_{X/S}\) (which they can only do only a certain locus in \(\mathcal{M}_{\dR}(X/S, n)\) where \(\lambda\) is torsion-free). 
\end{remark}
\begin{remark}[Signs]
		Note that our comparison formula involves a \([-1] : B\G_m \to B\G_m.\) This is related to how, in the abelian setting, Katz's formula invovles a sign of \(-1\) (Katz originally found a different sign due to a small algebra error; see remark 11.4.2 of André's \cite{andre2004}).

		One advantage of the de Rham stack approach is that the sign \(-1\) is forced on us, as without it is impossible to compare the \(+1\) twist in \(\psi_{X/S}|_{\lambda = 0}\) with the \(-1\) twist on in \(\Theta_{X/S}.\) Drinfeld, in formula (4.3) of \cite{drinfeldkatz}, also encounters this sign when comparing \(p\)-curvature and the Kodaira--Spencer class. 
\end{remark}

\subsection{Connection to the theory of synotomification} \label{secsyn}

Recall that vector bundles on \(\A^1/\G_m\) can be identified with filtered vector spaces, via the Rees construction.

The de Rham cohomology of \(S\) carries two natural filtrations in characteristic \(p\): the Hodge filtration and the conjugate filtration. Thus, in the stacky approach to \(p\)-adic Hodge theory, one can view \(H^q_{\dR}(S/k)\) as a vector bundle on \((\A^1/\G_m) \sqcup (\A^1/\G_m).\) Katz's formula ultimately results from the fact that the associated gradeds of these two pieces coincide, up to a Frobenius twist. Ignoring the Frobenius twist for expository purposes, we can then say that \(H^q_{\dR}(S/k)\) actually lives as a vector bundle on 
\[(\Spec k[u, t]/(ut))/\G_m,\]
where the \(\G_m\) acts on \(u\) with degree \(-1\) and on \(t\) with degree \(+1.\) 

\begin{figure}[h]
		\centering
		
		\begin{tikzpicture}[
  axis/.style={thick, -{Stealth[length=6pt]}},
  every node/.style={font=\small}
]
 
 
\draw[axis] (-1.5, 1) -- (1.5, 1) node[right] {$t$};
\filldraw (0,1) circle (2pt) node[below] {$0$};
 
\draw[axis] (-1.5,-1) -- (1.5,-1) node[right] {$u$};
\filldraw (0,-1) circle (2pt) node[below] {$0$};
 
\draw[-{Stealth[length=8pt]}, thick, shorten <=4pt, shorten >=4pt]
  (2.0, 0) -- node[above, align=center, font=\small]
  {glue} (4.0, 0);
 
 
\begin{scope}[xshift=6cm]
  \draw[axis] (-1.5, 0) -- (1.5, 0) node[right] {$t$};
  \draw[axis] (0, -1.5) -- (0, 1.5) node[above] {$u$};
  \filldraw (0,0) circle (2pt) node[below left] {$0$};
\end{scope}
 
\end{tikzpicture}

		\caption{Glueing the associated gradeds (ignoring the Frobenius twist)}
\end{figure}
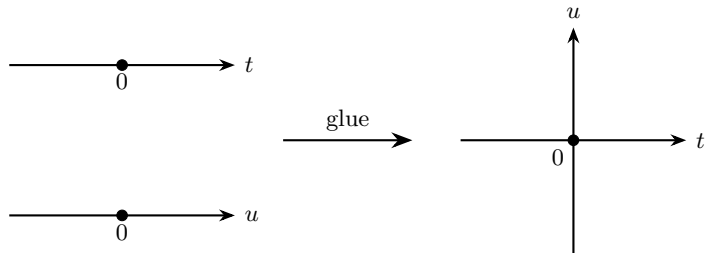

The observation that this glueing can be done is what will lead to our non-abelian Katz's formula. While we do not need it in this paper, we remark that there is one more observation that can be made: not only are the associated gradeds of the Hodge and conjugate filtrations the same, but they are filtrations on the same underlying vector space. Thus, not only can we glue together the closed points of the two copies of \(\A^1/\G_m,\) but we can also glue together their open points. Doing this precisely, and in particular taking into account the Frobenius twist, leads to the theory of \emph{syntomification}.\footnote{To do this, one should also allow \(p\)-adic coefficients and restrict to the case where \(k\) is perfect.} The interested reader can consult chapter 4 of Bhatt's \cite{bhatt} for more information on syntomification; we will not use it in this article. 

\subsection{The structure of this paper}

We hope to make this article readable even to a reader unfamilar with sheared de Rham stacks; thus, we start with the (purely expository!) \autoref{sec2}, which recalls basic facts about sheared de Rham stacks which we will use in the rest of the note.

With this background in place, we get to the main body of the paper. Katz's formula relates two objects: the Hodge filtration and the conjugate filtration. In \autoref{sec3}, we explain the non-abelian Hodge filtration and the construction of the non-abelian Higgs field \(\Theta_{X/S}.\) In \autoref{sec4}, we introduce the conjugate filtration, and construct the non-abelian \(p\)-curvature \(\psi_{X/S}|_{\lambda = 0}\) appearing in Katz's formula. Finally, in \autoref{katz}, with all the ingredients laid out, we prove Katz's formula.

\begin{remark}
		Our approach to non-abelian connections seems related to recent work of Sheng \cite{maosheng} and Fu--Sheng \cite{fusheng}, on what Sheng calls \emph{nonlinear Hodge theory}.
\end{remark}

\subsection{Acknowledgements}

The author thanks Bhargav Bhatt for very helpful conversations on the contents of this article. The author also thanks Zachary Berens and Akshay Venkatesh, who gave comments improving the exposition of this article. 

This paper owes its existence to Joshua Lam and Daniel Litt, for their inspiring article \cite{lamlitt} -- which suggested that de Rham stacks could shed light on non-abelian Katz's formula. The author also had a very helpful correspondence with them on the ideas of this note. We strongly suggest any reader interested in non-abelian \(p\)-curvature explore the fascinating work of Lam--Litt \cite{lamlitt} if they have not already.

\section{Review of sheared de Rham stacks} \label{sec2}

As in the introduction, fix \(k\) a field of positive characteristic, and \(S\) a smooth \(k\)-scheme. In this section, we introduce the \emph{sheared} de Rham stack \((S/k)^{\hat{\dR}}.\) 

\subsection{The characteristic 0 story}

We start by recalling some work of Simpson \cite{simpson} in characteristic 0. For a complex algebraic variety \(Y/\C,\) Simpson \cite{simpson} defined a functor
\[(Y/\C)^{\hat{\dR}} : \Alg_{\C} \to \Set\]
by the formula \((Y/\C)^{\hat{\dR}}(A) := Y(A_{\red}),\) where \(A_{\red} := A/\nil(A)\) for \(\nil(A)\) the nilradical of \(A.\)  

This functor is an fppf sheaf, has the remarkable property that it turns algebraic de Rham cohomology into coherent cohomology: the coherent cohomology of the fppf space \((Y/\C)^{\hat{\dR}}\) is canonically identified with the algebraic de Rham cohomology of \(Y,\) and the category \(\Qcoh((Y/\C)^{\hat{\dR}})\) is canonically identified with the category of quasi-coherent \(\mathcal{D}_Y\)-modules. As linear algebra is easier than differential algebra, in many situations it is convenient to switch from differential algebra on \(Y\) to linear algebra on \((Y/\C)^{\hat{\dR}}\); this is especially the case in non-abelian Hodge theory, as we shall see in this note. 

\begin{remark} \label{rem203}
		Note that \((Y/\C)^{\hat{\dR}}(A)\) is the set of all \(A\)-valued points of \(Y,\) modulo the equivalence relation that two points are equal if they are \emph{infinitesimally close} (meaning that they become the same when restricting to \(A_{\red}\)). 

		Roughly, one can think the importance of nilpotence is because of formal power series: when \(x' - x\in \nil(A)\) is nilpotent, any formal power series of the form \(\sum_{n=0}^{\infty} a_nT^n\) can be evaluated at \(T = x' - x,\) whereas normally infinite sums in a ring do not make sense without first imposing a topology on the ring and discussing convergence. 
\end{remark}
\begin{remark}
		The proof of \autoref{art217} also makes this analogy with power series precise; it is stated in positive characteristic, but the same proof strategy works (with even more ease) in characteristic 0.
\end{remark}

\subsection{The sheared de Rham stack of \(\A^1\)}

We now return to \emph{positive} characteristic.
\begin{warn}[Warning for experts]
		Some readers may be familiar with an object \((S/k)^{\dR},\) called the de Rham stack of \(S\) relative to \(k,\) which geometrizes \(\mathcal{D}_{S/k}\)-modules with \emph{nilpotent \(p\)-curvature}. We will \emph{not} be using this variant of the de Rham stack in this note, and in particular we have no restriction on \(p\)-curvature in our statements. The author regards it as something of a historical accident that this object was discovered before the sheared de Rham stack.
\end{warn}

Inspired on Simpson's work \cite{simpson} in characteristic 0, Drinfeld \cite{shearedwitt} and Bhatt--Kanaev--Mathew--Vologodsky--Zhang \cite{bkmvz} defined a positive characteristic object \((S/k)^{\hat{\dR}},\) called the \emph{sheared de Rham stack}, with the following properties:
\begin{enumerate}
		\item \((S/k)^{\hat{\dR}}\) is an fpqc stack \(\Alg_k \to \Gpd,\) 
		\item quasicoherent sheaves on \((S/k)^{\hat{\dR}}\) are just \(\mathcal{D}_{S/k}\)-modules,
		\item the algebraic de Rham cohomology of \(S/k\) is canonically identified with the coherent cohomology of \((S/k)^{\hat{\dR}}.\) 
\end{enumerate}
\begin{remark}
		We refer a reader to the upcoming work Bhatt--Kanaev--Mathew--Vologodsky--Zhang \cite{bkmvz} for proofs of these claims; we will not use them in our proof of Katz's formula, but we state these claims for motivational purposes. Similarly, we will use that vector bundles on \((S/k)^{\hat{\dR}}\) are simply vector bundles with flat connection on \(S\) when explaining a more concrete interpretation of our version of Katz's formula, but the proof of our main theorem is purely formal and does not rely on understanding this.
\end{remark}

The reader might observe now that Simpson's \((Y/\C)^{\hat{\dR}}\) was a \emph{space} (that is, its functor of points is set valued), whereas the sheared de Rham stack is a \emph{stack} -- its functor of points is groupoid valued. 

We now explain the origin of this discrepancy and give the definition of sheared de Rham stacks. In \autoref{rem203}, we explained that \((Y/\C)^{\hat{\dR}}\) can be obtained by quotienting \(Y\) be the relation of `infinitesimal closeness,' which is useful because one can evaluate arbitrary formal power series on nilpotent elements of a ring. However, the formal power series which arise in calculus often take the form \(\sum_n a_n\frac{(x_2-x_1)^n}{n!}.\) In characteristic 0, these denominators can be ignored, but in characteristic \(p\) it is a big deal. 

Thus, instead of saying that \(x_1, x_2\) are close if \(x_2 - x_1\) is nilpotent, we now say \(x_1\) and \(x_2\) are close if \(x_2 - x_1\) admits \emph{nilpotent divided powers}. 
\begin{defn}
		Let \(A\) be a ring. An element \(a \in A\) is said to admit \emph{nilpotent divided powers} if there is a sequence \((a_0, a_1, a_2, a_3, ...)\) of elements of \(A\) such that 
		\begin{enumerate}
				\item \(a_0 = 1,\)
				\item \(a_1 = a,\)
				\item for all \(n, m,\) we have \(\binom{n+m}{n}a_{n+m} = a_na_m,\)
				\item for all \(n \gg 0,\) we have \(a_n = 0.\)
		\end{enumerate}
\end{defn}
\begin{remark}
		We think of \(a_n\) as being \(a^n/n!\); the relationship \(\binom{n+m}{n}a_{n+m} = a_na_m\) is then because
		\[\frac{a^n}{n!} \cdot \frac{a^m}{m!} = \frac{a^{n+m}}{n!m!} = \frac{a^{n+m}}{(n+m)!} \cdot \binom{n+m}{n}.\] 
\end{remark}

While being nilpotent is a \emph{property} of an element, having nilpotent divided powers is \emph{structure}: there are possibly different nilpotent divided power structures on an element \(a \in A.\) Thus, instead of taking the set of all \(A\)-valued points of \(S\) and quotienting by an equivalence relation, we really should be using a groupoid, to remember that elements can be the same for multiple reasons. 

\begin{exmp}[The affine line] \label{exmp138} 
		Suppose that \(S = \Spec k[x]\) is the affine line. Then \((S/k)^{\hat{\dR}} : \Alg_k \to \Gpd\) is \emph{almost} described by the following formula: send a \(k\)-algebra \(A\) to the groupoid \(\mathcal{G}(A),\) where
		\begin{enumerate}
				\item \emph{objects} of \(\mathcal{G}(A)\) are elements \(x\in A,\)
				\item an \emph{isomorphism} \(x \heq y\) in \(\mathcal{G}(A)\) is the data of a nilpotent divided power structure on the difference \(y-x.\)
		\end{enumerate}
		
		We say \emph{almost} because the functor \(A \mapsto \mathcal{G}(A)\) does not obey fpqc descent, and so \((S/k)^{\hat{\dR}}\) is actually a sheafification of this functor. However, for a \emph{semiperfect} \(k\)-algebra \(A\), this groupoid \(\mathcal{G}(A)\) is the correct groupoid of \(A\)-valued points of \((\A^1/k)^{\hat{\dR}}.\) As semiperfect \(k\)-algebras form a basis in the fpqc topology, in practice one doesn't often need to think about this sheafification.
\end{exmp}

\begin{defn} \label{defn260} 
		We define \(\G_a^{\#, \wedge} : \Alg_k \to \Ab\) the functor
		\[\G_a^{\#, \wedge}(A) = \{(a_0, a_1, ...) \mid a_{\bullet}\text{ is a nilpotent PD sequence}\}.\] 
		The group structure is given by
		\[a_{\bullet} + b_{\bullet} := (1, a_1 + b_1, a_2 + a_1b_1 + b_2, a_3 + a_2b_1 + a_1b_2 + b_3, ...).\] 
		
		Note that there is a natural action of \(\G_a^{\#, \wedge}\) on \(\G_a,\) given on points by
		\[(a_0, a_1, ...) \times b := a_1b.\]
\end{defn}
\begin{theorem} \label{art217}
		Let \(M\) be a \(k[x]\)-module. Giving a \(\G_a^{\#, \wedge}\)-equivariant structure on \(M\) is equivalent to giving a connection \(\nabla : M \to M dx.\) 
\end{theorem}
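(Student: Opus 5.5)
The plan is to turn the question into one about comodules. \(\G_a^{\#, \wedge}\) is a formal group whose ring of functions is the completed divided power algebra, and its comodules are exactly modules over a polynomial ring \(k[\partial]\). The equivariance condition over \(k[x]\) should then be exactly the Leibniz rule for \(\partial\), with \(\partial\) acting on \(k[x]\) as \(d/dx\).

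\textbf{Step 1: represent \(\G_a^{\#, \wedge}\).} Let \(k\langle t\rangle\) be the divided power polynomial algebra with basis \(t^{[n]}\) and \(t^{[n]}t^{[m]} = \binom{n+m}{n}t^{[n+m]}\). A nilpotent PD sequence \(a_\bullet\) in \(A\) is the same as a ring map \(k\langle t\rangle \to A\) with \(t^{[n]} \mapsto a_n\) that kills \(t^{[n]}\) for \(n \gg 0\). The span \(J_N\) of the \(t^{[n]}\) with \(n \geq N\) is an ideal, so
\[\G_a^{\#, \wedge} = \operatorname{colim}_N \Spec k\langle t\rangle/J_N,\]
an ind-finite formal scheme. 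Its ring of functions is \(\prod_n k\, t^{[n]}\). The group law in \autoref{defn260} corresponds to the comultiplication \(\Delta t^{[n]} = \sum_{i+j=n} t^{[i]}\otimes t^{[j]}\). Dually, the continuous dual has basis \(\partial_n\) dual to \(t^{[n]}\), with \(\partial_i\partial_j = \partial_{i+j}\), so it is the polynomial ring \(k[\partial]\) with \(\partial = \partial_1\).

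\textbf{Step 2: unwind an equivariant structure.} Since the group is ind-finite, an equivariant structure on \(M\) is a compatible system of coactions over the truncations. Equivalently, it is a single coaction
\[\rho : M \to M\,\hat\otimes\, \prod_n k\,t^{[n]} = \prod_n M t^{[n]},\]
which is counital and coassociative. Write \(\rho(m) = \sum_n D_n(m)\,t^{[n]}\). Counitality gives \(D_0 = \mathrm{id}\), and coassociativity gives \(D_iD_j = D_{i+j}\). Hence \(D_n = D^n\) where \(D := D_1\), and conversely any endomorphism \(D\) defines a coaction by \(\rho(m) = \sum_n D^n(m)\,t^{[n]}\). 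Nilpotence is built into the completed tensor product, so no finiteness condition on \(D\) is needed.

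For \(k[x]\) with the translation action \(x \mapsto x + t\) (the formula \(a_\bullet\times b = a_1 b\) in \autoref{defn260}), we have \((x+t)^m = \sum_n \frac{m!}{(m-n)!}x^{m-n}t^{[n]}\). So the corresponding operator is \(D = d/dx\), and \(D^n\) gives the iterated derivatives without any division.

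\textbf{Step 3: compatibility with the module structure.} Equivariance of \(M\) as a \(k[x]\)-module means \(\rho(fm) = \rho(f)\rho(m)\). Taking the coefficient of \(t^{[1]}\), and using \(t^{[0]}t^{[1]} = t^{[1]}\), gives \(D(fm) = f'm + fD(m)\). Therefore \(\nabla(m) := D(m)\,dx\) is a connection; flatness is automatic in one variable.

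Conversely, suppose \(\nabla\) is given, and set \(D = \nabla_{d/dx}\) and \(\rho(m) = \sum_n D^n(m)\,t^{[n]}\). We must check \(\rho(fm) = \rho(f)\rho(m)\) in every degree. This is the iterated Leibniz rule
\[D^n(fm) = \sum_{i+j=n}\binom{n}{i} f^{(i)} D^j(m),\]
which matches exactly the structure constant \(t^{[i]}t^{[j]} = \binom{n}{i}t^{[n]}\). The two constructions are visibly inverse to each other.

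\textbf{Main obstacle.} The delicate point is Step 2: making precise that an equivariant structure for a non-affine, ind-represented group acting on a quasicoherent sheaf over all test algebras \(A\) is the same as a coaction into the completed tensor product. I would handle this by writing the action groupoid as the colimit over \(N\) of the finite flat truncations. At each finite level the action is a comodule over \(k\langle t\rangle/J_N\). Compatibility under the transition maps assembles these into the continuous coaction. One must also check that the truncations need not be subgroups: only the full colimit carries the group law, and the cocycle condition is checked after passing to the limit, where it becomes \(D_iD_j = D_{i+j}\).
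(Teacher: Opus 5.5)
Your proposal is correct and follows essentially the same route as the paper. The paper also encodes the equivariant structure as a continuous coaction expanded in the divided-power basis \((x_2-x_1)^n/n!\), which is your \(t^{[n]}\) with \(t = x_2 - x_1\), phrased via the PD-envelope of the diagonal rather than a group coaction on \(k[x]\); it likewise uses the cocycle condition to get \(\nabla_n = \nabla_1^n\) and reads off the Leibniz rule from the first-order term. Your explicit check of the converse via the iterated Leibniz rule, and your treatment of the ind-scheme and completed tensor product, fill in details the paper dismisses as ``easy to see.''
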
	
\begin{proof}
		Consider the ring \(R' = k[x_1, x_2, (x_2-x_1)^2/2!, ...],\) where we adjoined a divided power sequence for \(x_2 - x_1.\) Set \(R = \inverselim_n R'/((x_2-x_1)^n/n!),\) so that \(R\) is a topological ring whose elements are formal power series of the form
		\[\sum_n p_n(x_1) \cdot \frac{(x_2 - x_1)^n}{n!}.\]
		This topological ring \(R\) is the universal source of nilpotent PD sequences, in the sense that for any \(k\)-algebra \(A,\) giving a pair \(a_1, a_2 \in A\) with a nilpotent PD sequence on \(a_2 - a_1\) is equivalent to giving a continuous ring homomorphism \(R \to A.\) 

		Thus \(\G_a^{\#, \wedge} = \Spf(R),\) so that a \(\G_a^{\#, \wedge}\)-equivariant structure on \(M\) is equivalent to giving an isomorphism
		\[\eta : R \hat{\otimes}_{k[x]}^{x=x_2} M \heq R \hat{\otimes}_{k[x]}^{x=x_1} M.\]
		This \(\eta\) takes the form
		\[\eta(1 \otimes s) = \sum_{n=0}^{\infty} \frac{(x_2-x_1)^n}{n!} \otimes \nabla_n(s),\]
		for certain set-theoretic functions \(\nabla_n : M \to M,\) where the cocycle condition on \(\eta\) forces \(\nabla_n\nabla_m = \nabla_{n+m}, \nabla_0 = \id.\) In particular, \(\eta\) is uniquely determined by \(\nabla_{d/dx} := \nabla_1.\) Observe
		\begin{align*}
				x_2 \otimes s + x_2(x_2 - x_1) \otimes \nabla_{d/dx}(s) + \cdots &= x_2\eta(1 \otimes s) \\
																		  &= \eta(x_2 \otimes s) \\
																		  &= \eta(1 \otimes xs) \\
																		  &= 1 \otimes xs + (x_2 - x_1) \otimes \nabla_{d/dx}(xs) + \cdots \\
																		  &= x_1 \otimes s + (x_2 - x_1) \otimes \nabla_{d/dx}(xs) \\
																		  &= x_2 \otimes s + (x_2 - x_1) \otimes (\nabla_{d/dx}(xs) - s).
		\end{align*}
		Passing to the quotient where \((x_2-x_1)^2/2! = 0,\) we deduce \(\nabla_{d/dx}(xs) = s + \nabla_{d/dx}(s).\) In other words, \(\nabla\) always obeys the Leibniz rule and hence defines a connection; it's easy to see any connection gives rise to a valid \(\eta,\) so we conclude.
\end{proof}

\subsection{Sheared de Rham stacks in general}

There are two approaches to the definition of the sheared de Rham stack of a general smooth \(k\)-scheme \(S.\) Perhaps the most obvious route is to quotient \(S\) by the nilpotent PD-hull of the diagonal of \(S\times_k S.\) However, there is an extremely technically convenient alternative route, via a procedure called `transmutation.' 

Recall Simpson's de Rham stack was defined by \((Y/\C)^{\hat{\dR}}(A) = Y(A/\nil(A)).\) In particular, to define the de Rham stack, one only needs to understand the functor \(A \mapsto A/\nil(A)).\) 

In positive characteristic, as we have already seen, instead of quotienting by the ideal of nilpotent elements, we will need to quotient by some sort of groupoid of nilpotent PD structures. This is mostly simply done using Drinfeld's notion of \emph{quasi-ideals}. 
\begin{remark}[Review of Drinfeld's quasi-ideals]
		We review a few ideas of Drinfeld \cite{drinfeld}. Let \(R\) be a ring. A \emph{quasi-ideal} is an \(R\)-module \(I\) together with a map \(d : I \to R\) such that \(d(x) \cdot y = d(y) \cdot x\) for all \(x, y \in I.\) 

		In this situation, Drinfeld constructs a \emph{ring groupoid} \(\Cone(I \xto{d} R),\) which is a homotopical enhancement of the ring \(R/\im(d).\) 
\end{remark}
\begin{defn}
		Consider the quasi-ideal \(\G_a^{\#, \wedge} \to \G_a\) of \autoref{defn260}. We define a ring stack \(\G_a^{\hat{\dR}}\) as \(\Cone(\G_a^{\#, \wedge} \to \G_a).\) 
\end{defn}
\begin{remark}
		We refer a reader to section 2 of \cite{mine} for discussion of how the Drinfeld cone interacts with sheafification, as \(\G_a\) is not quite a ring but a sheaf of rings. For \(A\) semiperfect, though, it is literally true that
		\[\G_a^{\hat{\dR}}(A) = \Cone(\G_a^{\#, \wedge}(A) \to A).\] 
\end{remark}

We now define the sheared de Rham stack, using essentially the same definition as Simpson but replacing \(A_{\red}\) with \(\G_a^{\hat{\dR}}(A).\) 
\begin{defn}
		The \emph{sheared de Rham stack} of a smooth \(k\)-scheme \(S\) is the fpqc sheaf
		\[(S/k)^{\hat{\dR}} : \Alg_k \to \Gpd\]
		defined by 
		\[(S/k)^{\hat{\dR}}(A) = S(\G_a^{\dR}(A)).\] 
\end{defn}

\begin{remark}[Relative sheared de Rham stacks] \label{rem139}
		For a general smooth morphism \(f : X \to S,\) we define the \emph{relative sheared de Rham stack}
		\begin{equation} \label{eq149} (X/S)^{\hat{\dR}} \heq (X/k)^{\hat{\dR}} \times_{(S/k)^{\hat{\dR}}} S.\end{equation} 
\end{remark}
\begin{remark}
		One can still define \((X/S)^{\hat{\dR}}\) or \((S/k)^{\hat{\dR}}\) without assuming smoothness, but in the general case it seems these objects are not so well-behaved.
\end{remark}

\subsection{On \(p\)-curvature}

When constructing the sheared de Rham stack of \(\A^1,\) we discussed how `admitting divided powers' is a structure instead of a property, and so it is more natural to quotient by a groupoid instead of an equivalence relation. But what if we had just quotiented by the equivalence relation of `admitting nilpotent divided powers'? This ends up resulting in an invariant called \emph{\(p\)-curvature}. A slightly different approach to \(p\)-curvature via de Rham stacks can be found in section 3 of \cite{mine}. 

To begin, we remark that the equivalence relation `admitting nilpotent divided powers' is quite simple.
\begin{lemma} \label{lem155} 
		Let \(A\) be an \(\F_p\)-algebra. Then \(a \in A\) admits nilpotent divided powers if and only if \(a^p = 0.\) 
\end{lemma}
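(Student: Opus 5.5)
Both directions are direct manipulations of the divided power axioms, using only that \(p = 0\) in \(A\).

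For the forward direction, suppose \((a_0, a_1, a_2, \ldots)\) is a nilpotent PD sequence on \(a\). Applying the relation \(\binom{n+m}{n}a_{n+m} = a_na_m\) repeatedly with \(m = 1\) shows by induction that \(a^n = n!\,a_n\) for all \(n\). Taking \(n = p\) gives \(a^p = p!\,a_p = 0\), since \(p = 0\) in \(A\). Nilpotence of the sequence is not even needed for this direction.

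For the converse, suppose \(a^p = 0\). Define \(a_n := a^n/n!\) for \(0 \le n \le p-1\); these denominators are units in \(\F_p\). Set \(a_n := 0\) for \(n \ge p\). Then \(a_0 = 1\), \(a_1 = a\), and \(a_n = 0\) for \(n \gg 0\), so it remains to check the multiplicativity relation \(\binom{n+m}{n}a_{n+m} = a_na_m\). There are three cases.

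\begin{enumerate}
\item If \(n + m \le p-1\), the relation is the usual identity \(\frac{a^n}{n!}\cdot\frac{a^m}{m!} = \binom{n+m}{n}\frac{a^{n+m}}{(n+m)!}\), which holds because all factorials involved are invertible.
\item If \(n \ge p\) or \(m \ge p\), both sides vanish, since \(a_{n+m} = 0\) and one of \(a_n, a_m\) is \(0\).
\item If \(n, m \le p-1\) but \(n + m \ge p\), the left side is \(0\) because \(a_{n+m} = 0\). The right side is a unit multiple of \(a^{n+m}\), which vanishes because \(a^{n+m} = a^p\cdot a^{n+m-p} = 0\).
\end{enumerate}

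The only step needing care is the third case, where the relation must hold even though the left side was forced to be zero by our choice \(a_n = 0\) for \(n \ge p\). This is exactly where the hypothesis \(a^p = 0\) is used, so no real obstacle remains.
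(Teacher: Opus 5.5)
Your proof is correct and follows the same route as the paper: the forward direction uses the identity \(a^n = n!\,a_n\), obtained by iterating the relation with \(m=1\), and the converse uses the truncated sequence \(a_n = a^n/n!\) for \(n<p\) and \(a_n=0\) for \(n\ge p\). Your three-case verification just spells out what the paper leaves as ``easy to check.''
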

\begin{proof}
		If \(a\) admits nilpotent divided powers \((a_1, a_2, ...),\) then there is some \(a_p \in A\) such that
		\[a^p = p! \cdot a_p\]
		(this follows by repeatedly applying the divided power relation \(a_na_m = \binom{n+m}{n}a_{n+m}\) with \(m=1\)). But \(p! = 0\) in \(A,\) so \(a^p = 0 \cdot a_p = 0.\) 

		Conversely, assume \(a^p = 0.\) Then it is easy to check that  
		\[a_n = \begin{cases} a^n/n! & n < p, \\ 0 & n \geq p, \end{cases}\]
		is a nilpotent PD sequence.
\end{proof}

By \autoref{lem155}, the image of the quasi-ideal \(\G_a^{\#, \wedge} \to \G_a\) is the ideal \(\alpha_p \inclusion \G_a.\) Thus, if instead of quotienting by a groupoid we wanted to quotient by an equivalence relation, then we should transmute by \(\Cone(\alpha_p \inclusion \G_a)\) instead of \(\G_a^{\hat{\dR}} = \Cone(\G_a^{\#, \wedge} \to \G_a).\)

\begin{remark}[The Frobenius twist of a \(\G_a\)-module]
		By a \emph{\(\G_a\)-module}, we mean an fpqc sheaf of abelian groups on \(\Alg_k\) endowed with an action of the abelian sheaf \(\G_a.\) For a \(\G_a\)-module \(M,\) we write \(F_*M\) to denote the \(\G_a\)-module with the same underlying abelian sheaf, but with the action twisted by Frobenius: on \(A\)-points, for \(\lambda \in A, m \in M(A),\) we define 
		\[\lambda \cdot m := \lambda^pm,\]
		where the left hand side is the \(\G_a\)-action on \(F_*M,\) and the right hand side is evaluated using the \(\G_a\)-action on \(M.\)
\end{remark} 
\begin{lemma} \label{lem336}
		We have \(\Cone(\alpha_p \inclusion \G_a) \heq F_*\G_a\) as a \(\G_a\)-module.
\end{lemma}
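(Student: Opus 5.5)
The plan is to use the Frobenius map \(F : \G_a \to \G_a,\) \(x \mapsto x^p,\) and show that it exhibits \(F_*\G_a\) as the quotient of \(\G_a\) by \(\alpha_p.\) Since \(\alpha_p \inclusion \G_a\) is an honest injective quasi-ideal (an ideal subsheaf), the Drinfeld cone \(\Cone(\alpha_p \inclusion \G_a)\) is discrete: its groupoid of points has trivial automorphism groups, and it is the fpqc sheafification of the presheaf quotient \(A \mapsto A/\alpha_p(A)\). So the lemma reduces to identifying the fpqc quotient sheaf \(\G_a/\alpha_p\) with \(F_*\G_a\) as \(\G_a\)-modules.

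First, \(F : \G_a \to F_*\G_a\) is a morphism of \(\G_a\)-modules, since \(F(\lambda x) = \lambda^p x^p = \lambda \cdot F(x)\) with the twisted action on the target. It is additive because we are in characteristic \(p\). Its kernel is exactly \(\alpha_p\), by definition (\(x^p = 0\)); compare \autoref{lem155}. Hence \(F\) induces a monomorphism of \(\G_a\)-modules \(\G_a/\alpha_p \to F_*\G_a\).

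Next comes the key step, fpqc surjectivity of \(F\). Given a \(k\)-algebra \(A\) and \(y \in A,\) the faithfully flat (indeed free) extension \(A \to A[T]/(T^p - y)\) provides a \(p\)-th root of \(y\). Therefore \(F\) is an epimorphism of fpqc sheaves, and the induced map \(\G_a/\alpha_p \to F_*\G_a\) is an isomorphism. This identification is compatible with the \(\G_a\)-module structure: the module structure on the cone comes from the \(\G_a\)-action on \(\G_a\) preserving the ideal \(\alpha_p\), and \(F\) intertwines it with the twisted action.

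The main obstacle is bookkeeping rather than mathematics. One must justify that the Drinfeld cone of an injective quasi-ideal agrees with the sheaf quotient after sheafification, citing section 2 of \cite{mine} for how cones interact with sheafification. One must also check that the \(\G_a\)-module structure on the cone is the naive one. Once these points are granted, the argument is just the short exact sequence \(0 \to \alpha_p \to \G_a \xto{F} F_*\G_a \to 0\) of fpqc \(\G_a\)-modules.
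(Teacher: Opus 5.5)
Your proof is correct and follows essentially the same route as the paper. Frobenius \(x \mapsto x^p\) is \(\G_a\)-linear into \(F_*\G_a\) and kills \(\alpha_p\), giving an injective map \(\Cone(\alpha_p \inclusion \G_a) \to F_*\G_a\), which is a flat-local epimorphism because \(p\)-th roots exist flat-locally; you additionally spell out two points the paper leaves implicit, namely that the cone of an injective quasi-ideal is just the quotient sheaf and that \(A \to A[T]/(T^p - y)\) is an explicit faithfully flat cover.
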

\begin{proof}
		The commutative square
		\begin{center}
				\begin{tikzcd}
						\alpha_p \ar[r] \ar[d] & 0 \ar[d] \\
						\G_a \ar[r, "x \mapsto x^p"] & F_*\G_a
				\end{tikzcd}
		\end{center}
		of \(\G_a\)-modules induces an injective \(\G_a\)-linear map
		\begin{equation} \label{eq346} \Cone(\alpha_p \inclusion \G_a) \to F_*\G_a.\end{equation}

		This is surjective as a map of sheaves, because locally in the flat topology we can find \(p^{\text{th}}\) roots. 
\end{proof}
\begin{remark}
		In particular, the transmutation of \(S\) with respect to \(\G_a/\alpha_p\) is just \(S',\) the relative Frobenius twist of \(S\) over \(k.\) The natural map \(\G_a^{\hat{\dR}} \to \G_a/\alpha_p\) of ring stacks induces via transmutation a morphism
		\begin{equation} \label{eq464} (S/k)^{\hat{\dR}} \to S'.\end{equation} 
\end{remark}

We now study how far \autoref{eq464} is from being an equivalence. This boils down to understanding how far apart \(\alpha_p\) and \(\G_a^{\#, \wedge}\) are.
\begin{lemma} \label{lem493} 
		There is a short exact sequence of \(\G_a\)-modules
		\begin{equation} \label{eq495} 0 \to F_*\G_a^{\#, \wedge} \to \G_a^{\#, \wedge} \to \alpha_p \to 0.\end{equation}
		The map \(F_*\G_a^{\#, \wedge} \to \G_a^{\#, \wedge}\) is given on \(A\)-points by the formula
		\[(a_0, a_1, ...) \mapsto (b_0 = 1, b_1 = 0, b_2, b_3, ...),\]
		where 
		\[b_n := \begin{cases} 0 & p \nmid n, \\ a_{n/p} & p \mid n.\end{cases}\] 
\end{lemma}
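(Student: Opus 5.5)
We verify the statement directly on \(A\)-points for an arbitrary \(k\)-algebra \(A\). The second map \(\G_a^{\#, \wedge} \to \alpha_p\) is \((a_0, a_1, \ldots) \mapsto a_1\). It lands in \(\alpha_p\) by \autoref{lem155}, and it is additive because the group law of \autoref{defn260} adds first coordinates.

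\textbf{The first map.} We first check that the formula gives a well-defined map. Given a nilpotent PD sequence \(a_\bullet\), we need \(b_\bullet\) to be one as well, i.e.\ \(\binom{n+m}{n} b_{n+m} = b_n b_m\).
\begin{itemize}
  \item If \(p \nmid n\) or \(p \nmid m\), the right side is \(0\). On the left, either \(p \nmid n+m\), so \(b_{n+m} = 0\); or \(p \mid n+m\) while \(p \nmid n\), in which case Lucas's theorem gives \(p \mid \binom{n+m}{n}\).
  \item If \(n = pn'\) and \(m = pm'\), Lucas gives \(\binom{pn'+pm'}{pn'} \equiv \binom{n'+m'}{n'} \pmod p\), which reduces the identity to the one for \(a_\bullet\).
\end{itemize}
Nilpotence is clear.

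The map is additive: under the convolution law of \autoref{defn260} (\((a+b)_n = \sum_{i+j=n} a_i b_j\)), the sequence supported on multiples of \(p\) convolves exactly as \(a_\bullet\) does. For \(\G_a\)-linearity, note that \(\lambda\) acts on \(\G_a^{\#, \wedge}\) by \(a_n \mapsto \lambda^n a_n\), while on \(F_*\G_a^{\#, \wedge}\) it acts by \(a_n \mapsto \lambda^{pn} a_n\); the formula matches these. Injectivity is obvious, and the composite with \(a_\bullet \mapsto a_1\) is zero since \(b_1 = 0\).

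\textbf{Exactness in the middle.} Suppose \(a_1 = 0\). Then \(n!\, a_n = a_1^n = 0\) for \(n < p\), so \(a_n = 0\) for \(0 < n < p\). More generally, writing \(n = pq + r\) with \(0 < r < p\), the relation \(\binom{n}{r} a_n = a_{pq} a_r = 0\) together with \(p \nmid \binom{n}{r}\) (Lucas) gives \(a_n = 0\). So \(a_\bullet\) is supported on multiples of \(p\).

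Set \(c_q := a_{pq}\). Then \(c_\bullet\) is a PD sequence because \(\binom{pq+pq'}{pq} \equiv \binom{q+q'}{q}\) by Lucas, and \(a_\bullet\) is the image of \(c_\bullet\). Here we use that on the kernel the group law is honest convolution, which matches the structure on \(F_*\G_a^{\#, \wedge}\).

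\textbf{Surjectivity onto \(\alpha_p\).} This is exactly the converse direction of \autoref{lem155}, and it holds even on points, not merely fpqc-locally.

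\textbf{Main obstacle.} The only delicate point is the bookkeeping with Lucas's theorem in the two places above: the vanishing of \(a_n\) for \(p \nmid n\), and the congruence \(\binom{pn}{pm} \equiv \binom{n}{m}\). This congruence follows from \((1+x)^{pn} = (1+x^p)^n\) in \(\F_p[x]\).
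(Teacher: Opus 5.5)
Your proposal is correct and follows the paper's route: Lucas's theorem shows the Verschiebung-type map lands in nilpotent PD sequences, \(b_1 = 0\) gives the complex property, the reindexing \((1, \epsilon_p, \epsilon_{2p}, \ldots)\) (your \(c_q = a_{pq}\)) gives exactness in the middle, and \autoref{lem155} gives surjectivity onto \(\alpha_p\). Your version is also more complete: you prove that a PD sequence with \(a_1 = 0\) vanishes in all degrees prime to \(p\), which the paper's surjectivity-onto-the-kernel step uses without stating, and you check additivity of the first map explicitly.
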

\begin{remark}
		The map \(F_*\G_a^{\#, \wedge} \to \G_a^{\#, \wedge}\) is reminiscient of the Verschiebung; this analogy is made a little more precise by the Witt vector model of \(\G_a^{\hat{\dR}, c}\); see section 2.7 of Bhatt \cite{bhatt} for more information. 
\end{remark}
\begin{proof}
		By Lucas' theorem that \(\binom{pn}{pm}\equiv \binom{n}{m}\pmod{p},\) it's easy to check that \(b_{\bullet}\) is a PD sequence whenever \(a_{\bullet}\) is.
	
		This map \(F_*\G_a^{\#, \wedge} \to \G_a^{\#, \wedge}\) is injective (because a PD sequence \(a_{\bullet}\) in characteristic \(p\) is determined by its values \(a_{pn}\) on the multiples of \(p\)), and \(\G_a\)-linear (the Frobenius twist on the source corrects the shift by \(p\)). 

		As \(a_{\bullet} \in F_*\G_a^{\#, \wedge}(A)\) is always sent to a \(b_{\bullet}\) with \(b_1 = 0,\) we see that this map has image contained in the kernel of \(\G_a^{\#, \wedge} \to \alpha_p.\) It also surjects onto the kernel; for any PD structure \((1, 0, \epsilon_2, ...)\) on 0, the PD structure \((1, \epsilon_p, \epsilon_{2p}, ...)\) will be sent to \((1, 0, \epsilon_2, ...).\) The surjectivity of \(\G_a^{\#, \wedge} \to \alpha_p\) is \autoref{lem155}, so we conclude.
\end{proof}

\begin{defn}
		We set \(\G_a^{\psi_p} := \G_a \oplus B(F_*\G_a^{\#, \wedge}),\) and write \(S^{\psi_p}\) to denote the transmutation of \(S\) with respect to \(\G_a^{\psi_p}.\) 
\end{defn}

This \(S^{\psi_p}\) helps us explain the difference between \((S/k)^{\hat{\dR}}\) and \(S'.\) We can extract a useful invariant of connections, called \emph{\(p\)-curvature}, out of \(S^{\psi_p}.\) 
\begin{remark} \label{rem357} 
		Suppose \(S\) is a smooth \(k\)-scheme. A quasicoherent sheaf on \(S^{\psi_p}\) is then, by deformation theory (note \(\G_a^{\psi_p}\) is a \emph{split} square-zero extension of \(\G_a\)), just a quasicoherent sheaf \(\mathcal{E}\) on \(S\) together with a map \(\psi_p : \mathcal{E} \to F_{\abs}^*\Omega^1_S \otimes \mathcal{E}\) such that \(\psi_p \wedge \psi_p = 0.\) We give a slightly more concrete way to see this for \(S = \A^1\) in \autoref{rem431}.
\end{remark}
\begin{remark} \label{rem431}
		Let \(M\) be a \(k[x]\)-module. We now spell out what a \(F_*\G_a^{\#, \wedge}\)-equivariant structure on \(M\) is. The description is very close to the description of \(\G_a^{\#, \wedge}\)-equivariant structures from \autoref{art217}. 

		To give such an equivariant structure it is equivalent to give \(M\) the structure of a continuous \(H = k[x_1, x_2]\inner{x_2 - x_1}/(x_2 - x_1)\)-comodule, where \(H\) denotes the Hopf algebra 
		\[k\left[\frac{(x_2 - x_1)^p}{p!}, \frac{(x_2-x_1)^{2p}}{(2p)!}, ...\right].\]

		This comodule structure takes the form of a map
		\[\rho : M \to H \hat{\otimes}_{k[x]} M,\]
		and so we may write
		\[\rho(s) = \sum_{n=0}^{\infty} \frac{(x_2-x_1)^{np}}{(np)!} \otimes \psi_n(s)\]
		for various set-theoretic functions \(\psi_n : M \to M.\) The comodule axioms then become \(\psi_0 = \id, \psi_{n+m} = \psi_n \circ \psi_m,\) so that everything is determined by the endomorphism \(\psi : M \to M.\) It's easy to see every possible \(\psi\) gives rise to such a comodule, so that \(F_*\G_a^{\#, \wedge}\)-equivariant structures are the same as endomorphisms of \(M.\) 
\end{remark}
\begin{construction}[The \(p\)-curvature] \label{const360} 
		Suppose \(S\) is a smooth \(k\)-scheme. Let \((\mathcal{E}, \nabla)\) be a vector bundle with flat connection on \(S.\) Viewing \((\mathcal{E}, \nabla)\) as a vector bundle on \((S/k)^{\hat{\dR}},\) we define the \emph{\(p\)-curvature} of \((\mathcal{E}, \nabla)\) to be its pullback along the map
		\[S^{\psi_p} \to (S/k)^{\hat{\dR}}\]
		obtained by transmuting \(\G_a^{\psi_p} \to \G_a^{\hat{\dR}}.\) In light of \autoref{rem357}, we can view a vector bundle on \(S^{\psi_p}\) as a vector bundle on \(S\) together with a flat \(F_{\abs}^*\Omega^1_S\)-twisted endomorphism. As the quotient map \(\G_a \to \G_a^{\hat{\dR}}\) factors through \(\G_a^{\psi_p}\) (since \(\G_a^{\psi_p}\) is defined by quotienting \(\G_a\) by a smaller quasi-ideal), the pullback of \((\mathcal{E}, \nabla)\) to \(S^{\psi_p}\) has underlying vector bundle \(\mathcal{E},\) and hence the \(p\)-curvature can be viewed as a morphism
		\[\psi_p : \mathcal{E} \to F_{\abs}^*\Omega^1_S \otimes \mathcal{E}.\] 
\end{construction}

\begin{prop} \label{prop380}
		Let \(S\) be a smooth \(k\)-scheme. Recall that Cartier descent furnishes \(F_{\abs}^*\Omega^1_S\) with a natural flat connection. For any vector bundle with flat connection \((\mathcal{E}, \nabla),\) the \(p\)-curvature 
		\[\psi_p : \mathcal{E} \to F_{\abs}^*\Omega^1_S \otimes \mathcal{E}\]
		is horizontal. In fact, \(\psi_p\) agrees with the classical construction of \(p\)-curvature (as given by Katz \cite{Katz1972}). 
\end{prop}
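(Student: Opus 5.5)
Both claims are local on \(S\) and compatible with étale base change. The sheared de Rham stack, \(S^{\psi_p}\) and the transmutation maps are all functorial in \(S\), and they take étale maps to étale-locally Cartesian squares. So I would first reduce to \(S = \A^n\) with coordinates \(x_1, \dots, x_n\). I would then do the computation for \(n=1\) in detail, using the explicit comodule descriptions of \autoref{art217} and \autoref{rem431}. The general case follows by the same argument applied to each coordinate, since \(\G_a^{\#,\wedge}\) and \(F_*\G_a^{\#,\wedge}\) enter coordinatewise and the resulting operators commute.

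\textbf{Identifying \(\psi_p\) with Katz's \(p\)-curvature.} On \(S = \A^1\), a connection is a \(\G_a^{\#,\wedge}\)-equivariant structure. By the proof of \autoref{art217}, this is the isomorphism
\[\eta(1\otimes s) = \sum_n \frac{(x_2-x_1)^n}{n!}\otimes \nabla_n(s), \qquad \nabla_n = \nabla_{d/dx}^n\]
(the cocycle condition forces \(\nabla_n\nabla_m=\nabla_{n+m}\)). Pulling back along \(S^{\psi_p}\to(S/k)^{\hat{\dR}}\) means restricting the equivariance along the inclusion \(F_*\G_a^{\#,\wedge}\to\G_a^{\#,\wedge}\) of \autoref{lem493}. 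On coordinate rings this inclusion sends \((x_2-x_1)^n/n!\) to \(0\) when \(p\nmid n\), and sends \((x_2-x_1)^{pm}/(pm)!\) to the \(m\)-th divided power generator of \(H\). Hence the restricted comodule is
\[\rho(s)=\sum_m \frac{(x_2-x_1)^{pm}}{(pm)!}\otimes\nabla_{d/dx}^{pm}(s).\]
Comparing with \autoref{rem431}, the endomorphism \(\psi\) is \(\nabla_{d/dx}^p\).

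Katz's \(p\)-curvature is \(\nabla_{\partial}^p-\nabla_{\partial^{[p]}}\), and for \(\partial = d/dx\) we have \(\partial^{[p]}=0\), so the two agree. It remains to match the twist by \(F^*_{\abs}\Omega^1_S\) of \autoref{rem357}. For this I would track the \(\G_a\)-linearity of \(F_*\G_a^{\#,\wedge}\): scaling the coordinate by \(\lambda\) scales \(\psi\) by \(\lambda^p\). Therefore \(\psi\) pairs with \(F^*_{\abs}\partial\), which is exactly how Katz's \(p\)-linear map \(\partial\mapsto\psi(\partial)\) is packaged. In several variables the same computation gives \(\psi(\partial_i)=\nabla_{\partial_i}^p\).

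\textbf{Horizontality.} Once \(\psi_p\) is identified with Katz's construction, horizontality is the classical identity \([\nabla_D,\nabla_{\partial_i}^p]=\nabla_{[D,\partial_i^p]}+\dots\). In coordinates this reduces to the statement that \(\nabla_{\partial_j}\) commutes with \(\nabla_{\partial_i}^p\), which follows from flatness. One must also observe that the Cartier connection on \(F^*_{\abs}\Omega^1_S\) kills the basis elements \(F^*dx_i\), so horizontality of \(\psi_p\) means exactly that each \(\psi(\partial_i)\) commutes with every \(\nabla_{\partial_j}\).

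A more conceptual route to horizontality would be to show that the map \(S^{\psi_p}\to(S/k)^{\hat{\dR}}\) extends to a map over the quotient \((S/k)^{\hat{\dR}}\). That is, \(F_*\G_a^{\#,\wedge}\) acts compatibly with the full \(\G_a^{\#,\wedge}\)-action because \(\G_a^{\#,\wedge}\) is commutative. I expect making this precise on the level of transmutation to be the main obstacle, so I would fall back on the coordinate computation.

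The other delicate point is the signs and normalizations in identifying \(F_*\G_a^{\#,\wedge}\)-equivariance with \(F^*_{\abs}\Omega^1\)-valued endomorphisms, in particular checking that the divided power generator \((x_2-x_1)^p/p!\) corresponds to \(F^*dx\) with coefficient \(+1\). I would verify this directly from the formula in \autoref{lem493}.
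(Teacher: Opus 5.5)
Your proposal is correct and follows essentially the paper's route: reduce étale-locally to a single coordinate on affine space, restrict the divided-power comodule \(\sum_n \frac{(x_2-x_1)^n}{n!}\otimes\nabla_{d/dx}^n\) along the Verschiebung-type map so that only the \(\frac{(x_2-x_1)^{pm}}{(pm)!}\) terms survive, and read off \(\psi = \nabla_{d/dx}^p\). Your treatment of horizontality in several variables spells out what the paper leaves implicit in its one-coordinate-at-a-time reduction: you use flatness to get \([\nabla_{\partial_j},\nabla_{\partial_i}^p]=0\), and you note that the Cartier connection kills \(F^*dx_i\).
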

\begin{remark}
		The observation that \(p\)-curvature is horizontal seems to have first been made by Ogus \cite{ogus}. A different proof of this fact, also using de Rham stack's, can be found in Section 3 of \cite{mine}.
\end{remark}
\begin{proof}
		Checking a map is horizontal (or, for the latter part, that two maps are equal) can be done etale locally, so we may assume \(S = \A^d_k.\) In fact, by looking at the maps \(\psi_{p, d/dx_i} : \mathcal{E} \to \mathcal{E}, \nabla_{d/dx_i} : \mathcal{E} \to \mathcal{E},\) we can check this one coordinate at a time, and hence we can even assume \(d = 1\) to make the notation simpler.

		In the case where \(S = \Spec k[x],\) we then just check directly, using the explicit identification of connections and \(\G_a^{\#, \wedge}\)-equivariant structures explained in the proof of \autoref{art217}. Specifically, let \((M, \nabla)\) be a vector bundle with flat connection on \(S.\) The corresponding \(\G_a^{\#, \wedge}\)-equivariant structure on \(M\) is the map
		\[\rho : M \to k[x, x^2/2!, ...] \hat{\otimes}_{k[x]} M\]
		defined by
		\[\rho(s) = \sum_{n=0}^{\infty} \frac{x^n}{n!} \otimes \nabla_{d/dx}^n(s).\]

		Restricting this to \(F_*\G_a^{\#, \wedge}\) corresponds to quotienting \(k[x, x^2/2!, ...]\) by the ideal \((x),\) so that only the terms \(x^{np}/(np)!\) survive. As in \autoref{rem431}, we see that the endomorphism \(\psi_{p, d/dx} : M \to M\) encoding the \(F_*\G_a^{\#, \wedge}\)-equivariant structure is just \(\nabla_{d/dx}^p,\) exactly recovering the usual formula for \(p\)-curvature, and so we conclude.
\end{proof}

There is also a more precise claim about the relationship between \((S/k)^{\hat{\dR}} \to S'\) that one can make.
\begin{remark}[The torsor property] \label{torsorproperty}
		The map \((S/k)^{\hat{\dR}} \to S'\) constructed in \autoref{eq464} is in fact a torsor for \(B\mathbb{V}(F_{S/k}^*T_{S'/k}),\) as shown in Proposition 2.7.1 of Bhatt's \cite{bhatt}; though Bhatt's argument was given in the non-sheared case, it goes through essentially verbatim in the sheared case, as explained in section 2.3 of \cite{mine}.
\end{remark}

\section{Non-abelian Griffiths filtered connections} \label{sec3}

Fix \(k\) a field of characteristic \(p,\) and \(S\) a smooth \(k\)-scheme.

As remarked in the introduction, one of the most important structures on algebraic de Rham cohomology is its Hodge filtration. The Hodge filtration interacts with the Gauss--Manin connection via a condition called Griffiths transversality. 

\begin{remark}[Griffiths transversality] 
		We recall that, for a vector bundle with flat connection \((\mathcal{E}, \nabla : \mathcal{E} \to \Omega^1_S \otimes \mathcal{E})\) on \(S,\) a decreasing filtration \(F^{\bullet}\mathcal{E}\) is said to obey \emph{Griffiths transversality} if \(\nabla(F^i) \subseteq \Omega^1_S \otimes F^{i-1}.\) 
\end{remark}

In this section (\autoref{defn568}), we give the non-abelian analogue of a `filtration obeying the Griffiths transversality condition.' In \autoref{katz}, we will see that the non-abelian Gauss--Manin connection admits a non-abelian Hodge filtration obeying Griffiths transversality, which is why we need to develop this notion.

\subsection{Filtered stacks}

To begin, we recall Simpson's theory of non-abelian filtrations.
\begin{notn}
		We write \(\A^1/\G_m\) for the quotient stack of \(\Spec k[\lambda]\) by \(\Spec k[\lambda^{\pm 1}],\) with the usual action. We recall that \(\A^1/\G_m\) has two physical points: an open immersion \(\Spec(k) \inclusion \A^1/\G_m\), corresponding to the open orbit of all nonzero points, and a closed immersion \(B\G_m \to \A^1/\G_m,\) coming from the \(\G_m\)-equivariant closed immersion \(\{0\} \inclusion \A^1.\) 
\end{notn}

Inspired by the Rees construction, which identifies filtered vector spaces with quasicoherent sheaves on \(\A^1/\G_m,\) Simpson made the following definition.
\begin{defn} \label{defn400}
		A \emph{filtration} on a prestack \(T\) is a map of prestacks \(\tilde{T} \to \A^1/\G_m\) together with an isomorphism \(T \heq \tilde{T} \times_{\A^1/\G_m} \Spec(k).\) The \emph{associated graded} of a filtration \(\tilde{T}\) is the prestack \(\gr(\tilde{T}) := \tilde{T} \times_{\A^1/\G_m} B\G_m.\) In particular, \(\gr(\tilde{T})\) admits a natural map to \(B\G_m.\)
\end{defn}

\subsection{Review of Simpson's work and the Hodge-filtered sheared de Rham stack} \label{background2}

In this section, we will define a certain stack \((S/k)^{\hat{\dR}, +}\) over \(\A^1/\G_m\), called the \emph{Hodge-filtered sheared de Rham stack} and studied by Drinfeld \cite{prismatization} and Bhatt--Lurie \cite{bhattlurie} (see also Bhatt's \cite{bhatt}), which geometrizes Hodge-filtered de Rham cohomology in the same way that \((S/k)^{\hat{\dR}}\) geometrizes de Rham cohomology.

The idea behind \((S/k)^{\hat{\dR}, +}\) goes back to Simpson \cite{simpson}. Vector bundles on \((S/k)^{\hat{\dR}}\) are the same as vector bundles on \(S\) with flat connection; however, there is a natural 1-parameter deformation of the notion of flat connection. A \emph{\(\lambda\)-connection}, for \(\lambda \in k,\) on a vector bundle \(\mathcal{E}\) on \(S\) is a map
\[\nabla : \mathcal{E} \to \Omega^1_{S/k} \otimes \mathcal{E}\]
obeying the \(\lambda\)-Leibniz rule
\[\nabla(f \cdot s) = f\nabla(s) + \lambda df \otimes s.\] 

The moduli stack of \(\lambda\)-connections then lives over \(\A^1_{\lambda}.\) However, if \(\lambda' = u\lambda\) for a unit \(u \in k^{\times},\) then \(\lambda\)-connections and \(\lambda'\)-connections are in bijection (rescale by \(u\)). Thus it is slightly more natural to view the moduli of \(\lambda\)-connections as living over \(\A^1/\G_m.\) 

We now define \((S/k)^{\hat{\dR}, +}\) as some modification of the de Rham stack which lives over \(\A^1/\G_m\) and geometrizes \(\lambda\)-connections. 

\begin{exmp}[The Hodge-filtered de Rham stack of \(\A^1\)] \label{exmp382} 
		In the case of \(S = \A^1,\) recall that we defined \((\A^1_k/k)^{\hat{\dR}}\) as the fpqc stack whose functor of points sent a semiperfect \(k\)-algebra \(A\) to the groupoid whose objects are elements \(x \in A,\) and whose isomorphisms \(x \heq y\) are nilpotent PD structures on \(y-x.\) 

		To get \(\lambda\)-connections instead of connections, one just slightly tweaks this groupoid: for any \(\lambda \in k,\) we define \((\A^1_k/k)^{\lambda-\hat{\dR}}\) as the fpqc stack sending a semiperfect \(k\)-algebra \(A\) to the groupoid whose objects are again elements \(x \in A,\) but where isomorphisms \(x \heq y\) are now the datum of a nilpotent PD sequence \((\epsilon_0, \epsilon_1, \epsilon_2, ...)\) in \(A\) such that \(\lambda \epsilon_1 = y - x.\) Thus
		\[(\A^1/k)^{\lambda-\hat{\dR}} \heq \Cone(\G_a^{\#, \wedge} \xto{\lambda} \A^1).\]
\end{exmp}
\begin{defn}
		Inspired by \autoref{exmp382}, we define \((\G_a/k)^{\hat{\dR}, +}\) as the ring stack
		\begin{equation} \label{eq309} (\G_a/k)^{\hat{\dR}, +} := \Cone(\G_a^{\#, \wedge}(-1) \to \A^1)\end{equation} 
		over \(\A^1/\G_m.\)
\end{defn}
\begin{remark}
		Let us explain what we mean by this twist by \(-1.\) It is a consequence of working over \(\A^1/\G_m.\) 

		First, we explain a simpler sort of twist. When defining a stack over \(\A^1,\) your test objects are now \(k[\lambda]\)-algebras instead of \(k\)-algebras. A \(k[\lambda]\)-algebra can be viewed as a \(k\)-algebra \(A\) together with a map \(\lambda : A \to A.\) The twist by \(-1\) indicates that the map \(\G_a^{\#, \wedge}(A) \to A\) which we quotient by to get our groupoid of points \(\Spec(A) \to (\A^1/k)^{\hat{\dR}, +}\) should not just be the canonical forgetful map, but instead should be \(\lambda\) times the canonical forgetful map. 

		To get our meaning of twist, recall that a map \(\Spec(A) \to \A^1/\G_m\), for a \(k\)-algebra \(A,\) is just the data of some \(L \in \Pic(A)\) together with an \(A\)-linear map \(t : L \to A.\) By \autoref{eq309}, we just mean that the functor of points of \((\G_a/k)^{\hat{\dR}, +},\) as an \(\A^1/\G_m\)-stack, is such that for a semiperfect \(k\)-algebra \(A\) with a map \(\Spec(A) \to \A^1/\G_m,\) the mapping space
		\[\Map_{\A^1/\G_m}((\Spec(A), L \in \Pic(A), t : L \to A), (\G_a/k)^{\hat{\dR}, +})\]
		is the groupoid whose objects are elements \(x \in A\) and where an isomorphism \(x \heq y\) is the data of some \(\epsilon \in \G_a^{\#, \wedge}(A) \otimes_A L\) such that \((d \otimes t)(\epsilon) = y - x,\) for \(d : \G_a^{\#, \wedge}(A) \to A\) the forgetful map.  
\end{remark}
\begin{defn}[Hodge-filtered sheared de Rham stacks in general]
		Via transmutation, we then (following Bhatt \cite{bhatt}, definition 2.5.1) define \((S/k)^{\hat{\dR}, +} \to \A^1/\G_m\) as the functor
		\[(S/k)^{\hat{\dR}, +}(\Spec(A) \to \A^1/\G_m) := S(\G_a^{\hat{\dR}, +}(\Spec(A) \to \A^1/\G_m)).\] 
\end{defn}
\begin{remark} \label{rem551}
		The \((S/k)^{\hat{\dR}, +}\to \A^1/\G_m\) gives us a filtration on \((S/k)^{\hat{\dR}}\) in the sense of \autoref{defn400}, because \(\G_a^{\hat{\dR}, +}|_{\lambda=1} = \G_a^{\hat{\dR}}.\) 
\end{remark}
\begin{remark}[Relative Hodge-filtered de Rham stacks]
		For a smooth morphism \(X \to S,\) we generally define 
		\[(X/S)^{\hat{\dR}, +} = (X/k)^{\hat{\dR}, +} \times_{S^{\hat{\dR}, +}} S.\]
\end{remark}

\subsection{The Hodge stack} \label{background3}

The Hodge-filtered sheared de Rham stack \((S/k)^{\hat{\dR}, +}\) lives over \(\A^1/\G_m.\) The stack \(\A^1/\G_m\) has two physical points, \(\Spec(k)\) and \(B\G_m.\) We already saw (in \autoref{rem551}) that base changing along \(\Spec(k)\) turns the Hodge-filtered sheared de Rham stack into the ordinary sheared de Rham stack; in effect, it forgets the filtration.

It is then natural to ask what the base change over \(B\G_m\) is. Viewing a quasicoherent sheaf on \(\A^1/\G_m\) as a filtered vector space, the pullback to \(B\G_m\) corresponds to taking the associated graded (in fact, quasicoherent sheaves on \(B\G_m\) are just \(\Z\)-graded \(k\)-vector spaces). Thus we expect that the base change of \((S/k)^{\hat{\dR}, +}\) to \(B\G_m\) will have something to do with the associated graded of the Hodge filtration. This inspires the following name.
\begin{defn}
		We define the \emph{Hodge stack} of \(S\) over \(k\) to be
		\[(S/k)^{\Hodge} := (S/k)^{\hat{\dR}, +} \times_{\A^1/\G_m} B\G_m.\]
\end{defn}

Remarkably, \((S/k)^{\Hodge}\) has a very simple geometric structure.
\begin{prop} \label{prop555}
		There is an equivalence
		\[(S/k)^{\Hodge} \heq B_{S\times B\G_m}(T_{S/k}(-1)).\]
		In other words, \((S/k)^{\Hodge}\) is a split gerbe over \(S\times B\G_m.\) Here, by \(T_{S/k}(-1),\) we denote the vector bundle \(T_{S/k} \boxtimes \OO(-1)\) on \(S \times B\G_m\) (where \(\OO(-1)\) is the dual of the canonical line bundle \(\OO(1)\) on \(B\G_m\)). 
\end{prop}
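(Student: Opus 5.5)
We prove this by identifying the ring stack obtained from \(\G_a^{\hat{\dR}, +}\) by restricting to \(B\G_m\), and then transmuting. Over \(B\G_m\) the parameter \(t : L \to A\) is zero. The quasi-ideal \(\G_a^{\#, \wedge}(-1) \to \G_a\) therefore has zero differential, so
\[\G_a^{\Hodge} := \G_a^{\hat{\dR}, +}|_{B\G_m} = \Cone(\G_a^{\#, \wedge}(-1) \xto{0} \G_a) \heq \G_a \oplus B\G_a^{\#, \wedge}(-1).\]
The right-hand side is a split square-zero extension of \(\G_a\) by the \(\G_a\)-module stack \(B\G_a^{\#, \wedge}(-1)\). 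Here the ring structure on the Drinfeld cone of a zero quasi-ideal is exactly this split square-zero extension; for semiperfect \(A\) this is a direct check from Drinfeld's formulas, which suffices since these form a basis. Consequently
\[(S/k)^{\Hodge}(A) = S(A \oplus B\G_a^{\#, \wedge}(A)\otimes L^{-1}).\]

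The first step is to analyze points valued in such a split square-zero extension using deformation theory, as in \autoref{rem357}. Because the extension is split, there is a section \(S \to (S/k)^{\Hodge}\) over \(S\times B\G_m\), namely the one induced by \(A \to A \oplus B(\cdots)\). The fibre over an \(A\)-point \(x\) of \(S\) is the groupoid of lifts of \(x\) to the extension, which is \(\Hom_A(x^*\Omega^1_S, B\G_a^{\#,\wedge}(A)\otimes L^{-1})\). Since \(S\) is smooth, \(\Omega^1_S\) is locally free and this groupoid equals \(B\bigl(x^*T_S \otimes_A \G_a^{\#,\wedge}(A)\otimes L^{-1}\bigr)\). Étale-locally one can take \(S=\A^d\), where the whole identification is visible coordinate by coordinate: \(\A^d(A\oplus BM)= A^d \times (BM)^d\). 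Gluing the local identifications uses only the functoriality of derivations. The result is
\[(S/k)^{\Hodge} \heq B_{S\times B\G_m}\bigl(\mathbb{V}^{\#,\wedge}(T_{S/k}(-1))\bigr),\]
a split gerbe banded by the PD-completed vector group attached to \(T_{S/k}(-1)\).

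The second step is to reconcile this with the notation \(B(T_{S/k}(-1))\) in the statement, which I read as the classifying stack of the group \(\mathbb{V}^{\#,\wedge}\) of the vector bundle \(T_{S/k}(-1)\), i.e. \(\G_a^{\#,\wedge}\otimes T_{S/k}(-1)\). With that reading the result follows. The twist checks out: \(\G_a^{\#,\wedge}(-1)\) is tensored with \(L^{-1}\), i.e. \(\OO(-1)\), and \(x^*T_S\) supplies \(T_{S/k}\).

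The main obstacle I expect is making the identification of the cone with a split square-zero extension hold as ring stacks and compatibly with sheafification. In particular, the multiplication on \(\Cone(I\xto{0}R)\) must be the square-zero one, with the \(I\)-part squaring to zero. I would verify this directly on semiperfect algebras from Drinfeld's definition of the ring groupoid and then invoke \cite{mine} for the passage to sheafification. A secondary point is checking that the band is canonically \(T_{S/k}(-1)\) and not merely locally so. This follows because automorphisms of the trivial lift are derivations \(x^*\Omega^1_S \to \G_a^{\#,\wedge}\otimes L^{-1}\), which is intrinsic.
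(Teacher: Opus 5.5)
Your proposal is correct and is essentially the paper's own argument: over \(B\G_m\) the quasi-ideal \(\G_a^{\#,\wedge}(-1) \to \G_a\) has zero differential, so the cone is the split square-zero extension \(\G_a \oplus B\G_a^{\#,\wedge}(-1)\), and deformation theory for smooth \(S\) then gives the split gerbe. Your reading of the band as the PD-completed group \(\mathbb{V}(T_{S/k}(-1))^{\#,\wedge}\) is the intended one, matching how the paper uses the result later; one cosmetic slip is that in the paper's convention (\(t : L \to A\), with isomorphisms given by \(\epsilon \in \G_a^{\#,\wedge}(A)\otimes_A L\)) the \((-1)\)-twist means tensoring with \(L\), not \(L^{-1}\), though this does not affect the argument.
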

\begin{proof}
		We use the argument of Bhatt \cite{bhatt}, footnote 18. Over \(B\G_m,\) the map \(\G_a^{\#, \wedge}(-1) \to \G_a\) is just the zero map, and hence
		\[\G_a^{\hat{\dR}}|_{B\G_m} = \G_a \oplus B\G_a^{\#, \wedge}(-1).\]
		This is the split square-zero extension of \(\G_a\) by \(B\G_a^{\#, \wedge}(-1),\) and so we conclude by deformation theory. 
\end{proof}

\subsection{Griffiths-filtered non-abelian connections}

We can now introduce the notion of a \emph{Griffiths-filtered} non-abelian connection alluded to in the introduction to this section. 

\begin{defn} \label{defn568}
		A \emph{Griffiths-filtered non-abelian connection} over \(S\) is a Cartesian diagram of prestacks
		\begin{center}
				\begin{tikzcd}
						\tilde{E} \ar[r] \ar[d] & \tilde{E}' \ar[d] \\
						S \times \A^1/\G_m \ar[r] & (S/k)^{\hat{\dR}, +}. 
				\end{tikzcd}
		\end{center}

		Base changing this diagram of \(\A^1/\G_m\)-stacks along the open immersion \(\Spec(k) \to \A^1/\G_m,\) we get a Cartesian diagram
		\begin{center}
				\begin{tikzcd}
						E \ar[r] \ar[d] & E' \ar[d] \\
						S \ar[r] & (S/k)^{\hat{\dR}},
				\end{tikzcd}
		\end{center}
		which we call the \emph{underlying non-abelian connection}. 
\end{defn}	
\begin{remark}
		The data of this Cartesian diagram is a bit redundant; one can recover it from just the map \(\tilde{E}' \to (S/k)^{\hat{\dR}, +}.\) 
\end{remark}

We end this section by explaining a certain useful invariant of Griffiths-filtered connections. 
\subsection{Digression: A non-abelian variant of Cartier duality}

Recall that \emph{Cartier duality} identifies quasicoherent sheaves on \(B\mathbb{V}(\mathcal{E})^{\#, \wedge}\) with quasicoherent sheaves on \(\mathbb{V}(\mathcal{E}^{\vee})\) (we don't state this more precisely because we will not use this claim in the article). As the former objects are simpler than the latter, Cartier duality is incredibly computationally useful. 

We now introduce a (weak) non-abelian variant of Cartier duality. One reason for its relevance has already been seen: above, we showed \((S/k)^{\Hodge} \heq B_S\mathbb{V}(\Omega^1_S)^{\#, \wedge}.\) As we are considering non-abelian connections in this paper, instead of connections, we will have prestacks over \(B_S\mathbb{V}(\Omega^1_S)^{\#, \wedge}\) instead of quasicoherent sheaves on it. We now explain a simple construction to extract an easily understood invariant from a prestack over \(B_S\mathbb{V}(\Omega^1_S)^{\#, \wedge}.\) This construction will be used again in the sequel, which is why we give it in so much generality now. 

We first introduce some notation. 
\begin{defn}
		Let \(F : \Sch_S^{\op} \to \Gpd\) be a prestack over \(S.\) Its \emph{tangent prestack} is the functor
		\[\mathcal{T}_{F/S} : \Sch_S^{\op} \to \Gpd\]
		given by \(\mathcal{T}_{F/S}(Z) = F(Z[\epsilon]),\) for \(Z[\epsilon]\) the trivial square-zero extension of \(Z\) by \(\OO_Z.\) 

		Note that there is a natural map \(\mathcal{T}_{F/S} \to F,\) corresponding to the closed immersion \(Z \inclusion Z[\epsilon].\) 
\end{defn} 

We now give two constructions. 
\begin{situation} \label{setup580} 
		Fix \(\mathcal{E}\) a vector bundle on \(S,\) and \(F : \Sch_S^{\op} \to \Gpd\) a prestack, with structure morphism \(p : F \to S.\) 

		Set \(S' = S \oplus \mathcal{E}\) the trivial square-zero extension of \(S\) by \(\mathcal{E},\) and \(F \oplus p^*\mathcal{E}\) the trivial square-zero extension of \(F\) by \(p^*\mathcal{E}.\) That is, \(F \oplus p^*\mathcal{E}\) is the prestack 
		\[Z \mapsto \Hom_S(Z ,S') \times F(S').\] 
\end{situation}
\begin{construction}[Infinitesimal automorphism to derivation] \label{const638}
		Suppose we are in \autoref{setup580}. Fix an automorphism of prestacks
		\[\eta : F \oplus p^*(\mathcal{E}) \heq F \oplus p^*(\mathcal{E}).\]
		Given this automorphism \(\eta,\) we will construct a map of \(S\)-prestacks
		\[\theta : \mathbb{V}(\mathcal{E}^{\vee}) \times_S F \to \mathcal{T}_{F/S}.\]

		Indeed, fix a test scheme \(s : Z \to S.\) We need to produce a map
		\[\theta(Z) : (\mathbb{V}(\mathcal{E}^{\vee}) \times_S F)(Z) \to F(Z[\epsilon]).\]
		The groupoid \((\mathbb{V}(\mathcal{E}^{\vee}) \times_S F)(Z)\) has as its objects the pairs \((\alpha : s^*\mathcal{E} \to \OO_Z, x \in F(Z))\) and has isomorphisms \((\alpha, x) \heq (\alpha, x')\) in bijection with isomorphisms \(x \heq x'\) in \(F(Z).\) 
		On objects, we define \(\theta(Z)(\alpha, x)\) as follows. Writing \(Z' := Z \oplus s^*(\mathcal{E})\) the trivial square-zero extension, we can combine \(\alpha, x\) to get a composite map
		\[x_{\alpha} : Z[\epsilon] \xto{\alpha} Z' \xto{x'} F \oplus p^*(\mathcal{E}).\]
		Here, recalling 
		\[Z' = Z \times_S S', F \oplus p^*(\mathcal{E}) = F \times_S S',\]
		we set \(x' = (x, \id_{S'}).\) 

		We then define
		\[\theta(Z)(\alpha, x) := \pi(\eta(x_{\alpha})),\]
		for \(\pi : F \oplus p^*(\mathcal{E}) \to F\) the projection. As \(\theta(Z)\) is defined by taking \(x\) and applying various compositions, isomorphisms \(x \heq x'\) can be mapped to isomorphisms in \(F(Z[\epsilon]),\) and similarly we can construct coherences proving \(\theta(Z)\) is functorial in \(Z.\) Thus we get our map \(\theta,\) as desired.
\end{construction}
\begin{remark}
		When \(\eta = \id,\) the map \(\theta\) is just the canonical map
		\[\mathbb{V}(\mathcal{E}^{\vee}) \times_S F \to F \inclusion \mathcal{T}_{F/S},\]
		where the second map is the zero-section arising from the natural map \(Z[\epsilon] \xto{\epsilon = 0} Z\) of \(S\)-schemes. This is because we always have a commutative diagram
		\begin{center}
				\begin{tikzcd}
						Z' \ar[r, "x'"] \ar[d] & F \oplus p^*(\mathcal{E}) \ar[d, "\pi"] \\
						Z \ar[r, "x"] & F.
				\end{tikzcd}
		\end{center}
\end{remark}

\begin{construction} \label{const205}
		Fix \(S\) an arbitrary prestack, and choose a vector bundle \(\mathcal{E}\) on \(S.\) Let 
		\begin{center}
				\begin{tikzcd}
						E \ar[r] \ar[d, "\phi"] & E' \ar[d] \\
						S \ar[r, "s"] & B_S\mathbb{V}(\mathcal{E})^{\#, \wedge}
				\end{tikzcd}
		\end{center}
		be a Cartesian diagram of \(S\)-prestacks. Here, \(s : S \to B_S\mathbb{V}(\mathcal{E})^{\#, \wedge}\) is the natural section of the structure map \(B_S\mathbb{V}(\mathcal{E})^{\#, \wedge}) \to S\) (recalling that maps to \(B_S\mathbb{V}(\mathcal{E})^{\#, \wedge}\) classify \(\mathbb{V}(\mathcal{E})^{\#, \wedge}\)-torsors, this section \(s\) just classifies \(\mathbb{V}(\mathcal{E})^{\#, \wedge}\) itself, viewed as a torsor over \(S\)). 

		In this situation, we can construct a map of prestacks
		\[\theta : \mathbb{V}(\phi^*\mathcal{E}) \to \mathcal{T}_{E/S}\]
		as follows.

		This diagram gives rise to a \(\mathbb{V}(\mathcal{E})^{\#, \wedge}\)-action on \(E\), with action map \(a : \mathbb{V}(\mathcal{E})^{\#, \wedge}\times_S E \to E\) defined as the composition
		\begin{align*}
				\mathbb{V}(\mathcal{E})^{\#, \wedge} \times_S E &\heq (S \times_{B\mathbb{V}(\mathcal{E})^{\#, \wedge}} S) \times_S (S \times_{B\mathbb{V}(\mathcal{E})^{\#, \wedge}} E') \\
																&\heq S \times_{B\mathbb{V}(\mathcal{E})^{\#, \wedge}} E' \times_{B\mathbb{V}(\mathcal{E})^{\#, \wedge}} S \\
																&\to S \times_{B\mathbb{V}(\mathcal{E})^{\#, \wedge}} E' \\
																&\heq E. 
		\end{align*}

		We first construct, from this action, an automorphism \(\eta\) of \(E \oplus \phi^*(\mathcal{E}^{\vee})\) (the base change of \(E\) along \(S \oplus \mathcal{E}^{\vee} \to S\)). 

		The groupoid of \(Z\)-points of \(E \oplus \phi^*(\mathcal{E}^{\vee})\) is \(\Hom_S(Z, S \oplus \mathcal{E}^{\vee}) \times E(Z).\) As \(S \oplus \mathcal{E}^{\vee}\) is just the zero section of \(\mathbb{V}(\mathcal{E}),\) we get a closed immersion \(i : S \oplus \mathcal{E}^{\vee} \inclusion \mathbb{V}(\mathcal{E})^{\#, \wedge}.\) Base changing this closed immersion along \(E\) and composing with the action map \(a\) from above, we get a map
		\[E \oplus \phi^*(\mathcal{E}^{\vee}) \inclusion \mathbb{V}(\mathcal{E})^{\#, \wedge} \times_S E \xto{a} E.\]

		This map, together with the projection \(E \oplus \phi^*(\mathcal{E}^{\vee}) \to S \oplus \mathcal{E}^{\vee},\) defines a nontrivial map
		\[\eta : E \oplus \phi^*(\mathcal{E}^{\vee}) \to E\oplus \phi^*(\mathcal{E}^{\vee}).\]
		An inverse to \(\eta\) can be obtained via a similar construction, and so \(\eta\) is an automorphism. In particular, we can now apply \autoref{const638} to construct our desired map \(\theta\). 
\end{construction}

\subsection{Non-abelian Higgs fields} \label{nonabhiggs}

We now explain the non-abelian analogue of the notion of Higgs fields. For the reader's benefit, we start by recalling Higgs fields in the abelian case.
\begin{remark}[What is a Higgs fields in the abelian case?] 
		Suppose we have a vector bundle with flat connection \((\mathcal{E}, \nabla)\) on \(S,\) together with a decreasing filtration \(F^{\bullet}\mathcal{E}\) obeying Griffiths transversality: 
\[\nabla(F^n\mathcal{E}) \subseteq \Omega^1_{S/k} \otimes F^{n-1}\mathcal{E}.\]
As the filtration is decreasing, Griffiths transversality is slightly weaker than saying \(\nabla(F^n\mathcal{E}) \subseteq \Omega^1_{S/k} \otimes F^n\mathcal{E}.\) In other words, \(\nabla\) need not preserve the filtration \(F^{\bullet},\) but it isn't that far from preserving it. The \emph{Higgs field} of \((F^{\bullet}, \mathcal{E}, \nabla)\) is some linear invariant measuring how far \(\nabla\) is from preserving \(F^{\bullet}.\) The Higgs field is defined to be the map
\[\theta : \bigoplus_{n\in\Z} \gr^n_F\mathcal{E} \to \Omega^1_{S/k} \otimes \bigoplus_{n\in\Z} \gr^{n-1}_F\mathcal{E}\]
obtained as the direct sum of the maps
\[\theta_n : \gr^n_F \to \Omega^1_{S/k} \otimes \gr^{n-1}_F\]
given by \(\theta_n(s) = \nabla(s).\) This is well-defined on the graded pieces thanks to Griffiths transversality. Moreover, unlike \(\lambda,\) these maps \(\theta_n\) are actually linear, because
\[\theta_n(fs) = df \otimes s + f\nabla(s) = 0 + f\theta_n(s),\]
because the term \(df \otimes s \in \Omega^1_{S/k} \otimes F^n\) will vanish in \(\Omega^1_{S/k} \otimes \gr^{n-1}_F = \Omega^1_{S/k} \otimes F^{n-1}/F^n.\) 
\end{remark}

To construct a non-abelian analogue of Higgs fields, we simply apply \autoref{const205} to the split gerbe \((S/k)^{\Hodge}.\) 

\begin{construction}[Non-abelian Higgs field] \label{higgsfields}
		Let
		\begin{center}
				\begin{tikzcd}
						E \ar[r] \ar[d] & E' \ar[d] \\
						S \times \A^1/\G_m \ar[r] & (S/k)^{\hat{\dR}, +}
				\end{tikzcd}
		\end{center}
		be a Cartesian diagram (that is, a Hodge-filtered non-abelian connection). This is a Cartesian diagram of stacks over \(\A^1/\G_m\); base change to \(B\G_m\) to get a Cartesian diagram
		\begin{center}
				\begin{tikzcd}
						\gr(E) \ar[r] \ar[d, "\phi"] & E' \times_{\A^1/\G_m} B\G_m \ar[d] \\
						S \times B\G_m \ar[r] & (S/k)^{\Hodge}.
				\end{tikzcd}
		\end{center}
		
		But \((S/k)^{\Hodge}\) is a split gerbe, as we just explained, so \autoref{const205} can be applied to construct a map
		\[\theta : \mathbb{V}(\phi^*T_{S/k}(-1)) \to \mathcal{T}_{\gr(E)/(S\times B\G_m)}.\] 
\end{construction}

\section{Non-abelian conjugate filtrations and \(p\)-curvature} \label{sec4}

Let \(k\) be a field of characteristic \(p\) and \(S\) a smooth \(k\)-scheme. In the introduction, we alluded to the existence of a second filtration on algebraic de Rham cohomology; we now explore it via de Rham stacks.

\subsection{The conjugate filtration}

Algebraic de Rham cohomology in positive characteristic carries an extra filtration besides the Hodge filtration, called the \emph{conjugate filtration}; see the discussion of Katz \cite{Katz1972} for a more classical perspectives.

Bhatt \cite{bhatt} (particularly his section 2.7) takes the following perspective on the conjugate filtration. Recall from \autoref{torsorproperty} that \((S/k)^{\hat{\dR}} \to S'\) is a torsor for \(B\mathbb{V}(\Omega_{S'/k}^1)^{\#, \wedge}.\) It is then natural to put \((S/k)^{\hat{\dR}}\) in a 1-parameter family of stacks over \(S'\) which degenerates to the split torsor. As with the Hodge-filtered de Rham stack, it is convenient to view this degeneration as living over \(\A^1/\G_m\) instead of \(\A^1.\) 

\begin{defn}
		In this construction, we work over \(\A^1/\G_m.\) Following Bhatt \cite{bhatt}'s construction 2.7.8, we define a \(\G_a\)-module \(G_u\) as the pushout
		\begin{center}
				\begin{tikzcd}
						F_*\G_a^{\#, \wedge} \ar[r] \ar[d] & \G_a^{\#, \wedge} \ar[d] \\
						F_*\G_a^{\#, \wedge}(+1) \ar[r] & G_u,
				\end{tikzcd}
		\end{center}
		where \(F_*\G_a^{\#, \wedge} \to F_*\G_a^{\#, \wedge}(+1)\) is the canonical map induced by the twist, and \(F_*\G_a^{\#, \wedge} \to \G_a^{\#, \wedge}\) is the Verschiebung-type map of \autoref{lem493}. 

		The quasi-ideals \(0 : F_*\G_a^{\#, \wedge}(+1) \to \G_a\) and \(\can : \G_a^{\#, \wedge} \to \G_a\) agree on \(F_*\G_a^{\#, \wedge}\) (recall \autoref{lem493}), and hence there is a natural map \(G_u \to \G_a\) making \(G_u\) into a quasi-ideal.
		
		We define \(\G_a^{\hat{\dR}, c} := \Cone(G_u \to \G_a).\) 
\end{defn}
\begin{remark}[Signs]
		When studying the Hodge-filtered de Rham stack, we frequently encountered twists by \(-1.\) This negative one ultiamtely came from the fact that Griffiths transversality deals with \emph{decreasing} filtrations. As the conjugate filtration is instead \emph{increasing}, this \(G_u\) is defined using a twist by \(+1.\) 
\end{remark}
\begin{remark}[An alternative motivation]
		Let \(\lambda\) denote the coordinate on \(\A^1.\) Then, for \(t \in k,\) vector bundles on \(\G_a^{\hat{\dR}, c}|_{\lambda = t}\) are, by the same argument as in \autoref{art217}, naturally identified with vector bundles with flat connection \((\mathcal{E}, \nabla)\) on \(\A^1\) together with a \emph{horizontal} integrable map \(\theta : \mathcal{E} \to \Omega^1_{\A^1/k} \otimes \mathcal{E}\) such that \(t\theta = \psi_p.\) The horizontalness comes from the constraint that the \(\G_a^{\#, \wedge}\) and \(F_*\G_a^{\#, \wedge}\)-equivariant structures commute, just as it did in \autoref{prop380}. Thus this \(\G_a^{\hat{\dR}, c}\) is just the de Rham stack way of accessing the stack Lam--Litt \cite{lamlitt} dubbed \(\mathcal{M}_{\operatorname{conj}}.\)
\end{remark}
\begin{defn}
		We define the \emph{conjugate-filtered de Rham stack} of \(S\) as the stack \((S/k)^{\hat{\dR}, c} \to \A^1/\G_m\) obtained via transmutation with respect to \(\G_a^{\hat{\dR}, c},\) so that
		\[(S/k)^{\hat{\dR}, c}(\Spec(A) \to \A^1/\G_m) = S(\G_a^{\hat{\dR}, c}(\Spec(A) \to \A^1/\G_m)).\]
\end{defn}
\begin{remark} \label{rem689}
		As with the de Rham stack, there are natural maps of \(\A^1/\G_m\)-ring stacks
		\begin{equation} \label{eq681} \G_a \to \G_a^{\hat{\dR}, c} \to F_*\G_a,\end{equation}
		the first one coming from the fact that \(\G_a^{\hat{\dR}, c}\) is a quotient of \(\G_a\) (by the quasi-ideal \(G_u\)) and the second one being the map induced by
		\begin{center}
				\begin{tikzcd}
						G_u \ar[r] \ar[d] & \alpha_p \ar[d] \\
						\G_a \ar[r, "\id"] & \G_a,
				\end{tikzcd}
		\end{center}
		the map \(G_u \to \alpha_p\) being given by the sum of the canonical map \(\G_a^{\#, \wedge} \to \alpha_p\) with the zero map \(0 : F_*\G_a^{\#, \wedge}(+1) \to \alpha_p.\) 

		Transmuting \autoref{eq681}, we get natural maps \(S \times \A^1/\G_m \to S^{\hat{\dR}, c}\) and \(S^{\hat{\dR}, c} \to S' \times \A^1/\G_m,\) 
		whose composition is just the Frobenius \(F_{S/k} \times \id_{\A^1/\G_m} : S \times \A^1/\G_m \to S' \times \A^1/\G_m.\) 
\end{remark}
\begin{remark}[Relative conjugated-filtered de Rham stacks]
		For a smooth morphism \(X \to S,\) we generally define 
		\[(X/S)^{\hat{\dR}, c} = (X/k)^{\hat{\dR}, c} \times_{S^{\hat{\dR}, c}} S.\]
\end{remark}

\subsection{The associated graded of \(S^{\hat{\dR}, c}\)} \label{sec699} 

As with the Hodge-filtered de Rham stack, we can take the associated graded of \(S^{\hat{\dR}, c}.\) 
\begin{defn}
		The \emph{conjugate Hodge stack} is defined as 
		\[S^{\Hodge, c} := S^{\hat{\dR}, c} \times_{\A^1/\G_m} B\G_m.\]
\end{defn}
\begin{remark}This \(S^{\Hodge, c}\) can also be obtained by transmutation with respect to \(\G_a^{\hat{\dR}, c}|_{\lambda = 0}.\)
\end{remark}

\begin{prop} \label{prop750} 
	There is a canonical isomorphism 	
		\[S^{\Hodge, c} \heq B_{S'\times B\G_m}\mathbb{V}(T_{S'/k}(+1)).\] 
\end{prop}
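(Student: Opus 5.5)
The plan is to imitate the proof of \autoref{prop555}: first compute the ring stack \(\G_a^{\hat{\dR}, c}|_{\lambda=0}\) explicitly, and then transmute. Over \(B\G_m\), the twisted map \(F_*\G_a^{\#, \wedge} \to F_*\G_a^{\#, \wedge}(+1)\) is multiplication by \(\lambda\), so it becomes zero. The pushout defining \(G_u\) therefore splits as
\[G_u|_{\lambda = 0} \heq F_*\G_a^{\#, \wedge}(+1) \oplus \Cone\bigl(F_*\G_a^{\#, \wedge} \to \G_a^{\#, \wedge}\bigr)[\ldots],\]
where the second summand should be taken with care. By \autoref{lem493}, \(F_*\G_a^{\#, \wedge} \to \G_a^{\#, \wedge}\) is injective with cokernel \(\alpha_p\). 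Computing the pushout with the zero map, I expect \(G_u|_{\lambda=0}\) to be an extension of \(\alpha_p\) by \(F_*\G_a^{\#, \wedge}(+1)\). The quasi-ideal map \(G_u \to \G_a\) kills the first piece and is the inclusion \(\alpha_p \inclusion \G_a\) on the quotient.

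Next I will form the cone. Since \(G_u|_{\lambda=0}\to \G_a\) factors through \(\alpha_p\), with kernel \(F_*\G_a^{\#, \wedge}(+1)\), the cone should sit in a fiber sequence
\[B F_*\G_a^{\#, \wedge}(+1) \to \G_a^{\hat{\dR}, c}|_{\lambda=0} \to \Cone(\alpha_p \inclusion \G_a) \heq F_*\G_a,\]
where the last identification is \autoref{lem336}. The map to \(F_*\G_a\) is the second map of \autoref{eq681}. The key claim is that this is a \emph{split} square-zero extension of \(F_*\G_a\) by \(BF_*\G_a^{\#, \wedge}(+1)\) as ring stacks. The most natural way to obtain the splitting is to produce a ring map \(F_*\G_a \to \G_a^{\hat{\dR}, c}|_{\lambda=0}\) sectioning it. I would try to build it by showing that the extension \(0 \to F_*\G_a^{\#,\wedge}(+1) \to G_u|_{\lambda=0} \to \alpha_p \to 0\) splits compatibly with the quasi-ideal structure. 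At \(\lambda=0\) the pushout is \(\G_a^{\#,\wedge}\sqcup_{F_*\G_a^{\#,\wedge}} F_*\G_a^{\#,\wedge}(+1)\) along the zero map, which equals \(\alpha_p \oplus F_*\G_a^{\#,\wedge}(+1)\) once one knows the sequence in \autoref{lem493} is compatible with the zero map. Then \(\Cone(G_u|_{\lambda=0}\to\G_a) = \Cone(\alpha_p\to\G_a)\oplus BF_*\G_a^{\#,\wedge}(+1) = F_*\G_a \oplus BF_*\G_a^{\#,\wedge}(+1)\).

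With the split square-zero description in hand, transmutation proceeds as in \autoref{rem357} and \autoref{prop555}. Transmuting \(S\) along \(F_*\G_a\) gives \(S'\); this is the remark after \autoref{lem336}. Deformation theory for the split square-zero extension by \(BF_*\G_a^{\#,\wedge}(+1)\) then identifies \(S^{\Hodge,c}\) with the classifying stack over \(S'\times B\G_m\) of \(T_{S'/k}\otimes F_*\G_a^{\#,\wedge}(+1)\), that is, \(B_{S'\times B\G_m}\mathbb{V}(T_{S'/k}(+1))^{\#,\wedge}\). Here the Frobenius twist on the module is absorbed into passing from \(S\) to \(S'\). This is consistent with \autoref{torsorproperty}, and I read the statement's \(\mathbb{V}\) with the \(\#,\wedge\) understood, matching \autoref{prop555}.

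The main obstacle is the splitting step. One needs the section \(F_*\G_a\to\G_a^{\hat{\dR},c}|_{\lambda=0}\) to be multiplicative, not merely additive, so that the result really is a split square-zero extension of ring stacks. I would first try to get the section by transmuting an honest ring map out of the Witt-vector model (Bhatt's section 2.7). Failing that, I would check directly on semiperfect \(A\) that the \(p\)-th power map \(A\to A\) lifts, using that \(x\mapsto x^p\) kills \(\alpha_p\). Verifying that the second summand is square-zero amounts to noting that products of PD-sequences with twist \(+1\) land in twist \(+2\) and vanish at \(\lambda=0\).
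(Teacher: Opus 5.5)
Your proposal is correct and is essentially the paper's proof: at \(\lambda = 0\) the pushout along the zero map gives \(G_u|_{\lambda=0} \heq \alpha_p \oplus F_*\G_a^{\#, \wedge}(+1)\), with the quasi-ideal map vanishing on the second summand. By \autoref{lem336} the cone is then \(F_*\G_a \oplus B(F_*\G_a^{\#, \wedge}(+1))\), deformation theory finishes, and you are right to read the band as \(\mathbb{V}(T_{S'/k}(+1))^{\#, \wedge}\).

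The step you flag as the main obstacle is not one. The section \(F_*\G_a \heq \Cone(\alpha_p \inclusion \G_a) \to \Cone(G_u|_{\lambda=0} \to \G_a)\) is induced by the morphism of quasi-ideals \(\alpha_p \inclusion \alpha_p \oplus F_*\G_a^{\#, \wedge}(+1)\) over \(\id_{\G_a}\), so it is a ring map by functoriality of the Drinfeld cone. Square-zero-ness comes from the quasi-ideal map being zero on \(F_*\G_a^{\#, \wedge}(+1)\), on which \(\alpha_p\) acts trivially because of the Frobenius twist, and not from any argument about twists by \((+2)\).
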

\begin{proof}
Over \(B\G_m,\) the pushout diagram defining \(G_u\) becomes simply
\begin{center}
				\begin{tikzcd}
						F_*\G_a^{\#, \wedge} \ar[r] \ar[d, "0"] & \G_a^{\#, \wedge} \ar[d] \\
						F_*\G_a^{\#, \wedge}(+1) \ar[r] & G_u|_{\lambda = 0},
				\end{tikzcd}
\end{center}
and so \(G_u|_{\lambda = 0} \heq \alpha_p \oplus F_*\G_a^{\#, \wedge}(+1),\) recalling from \autoref{lem493} that \(\G_a^{\#, \wedge}/F_*\G_a^{\#, \wedge} \heq \alpha_p.\) From \autoref{lem336}, we have \(\G_a/\alpha_p \heq F_*\G_a,\) and so
\[\Cone(G_u|_{\lambda =0} \to \G_a) \heq \Cone(F_*\G_a^{\#, \wedge}(+1) \xto{0} F_*\G_a) = F_*\G_a \oplus B(F_*\G_a^{\#, \wedge}(+1)).\]
Exactly as with the Hodge stack, deformation theory now tells us that \(S^{\Hodge, c}\) is a canoncially split \(B\mathbb{V}(T_{S'/k}(+1))\)-torsor over \(S',\) as desired.
\end{proof}

\subsection{Conjugate filtered connections and their \(p\)-curvature}

Analogously to Griffiths-filtered non-abelian connections, we can also define a notion of non-abelian connections equipped with something behaving like the conjugate filtration. 
\begin{defn}
		A \emph{conjugate-filtered non-abelian connection} is a Cartesian diagram of \(\A^1/\G_m\)-prestacks
		\begin{center}
				\begin{tikzcd}
						E \ar[r] \ar[d] & \tilde{E} \ar[d] \\
						S \times \A^1/\G_m \ar[r] & S^{\hat{\dR}, c}.
				\end{tikzcd}
		\end{center}
\end{defn}		

Just as the description of \((S/k)^{\Hodge}\) as a split gerbe allowed us to define non-abelian Higgs fields, \autoref{prop750} allows us to extract a certain interesting invariant out of conjugate-filtered non-abelian connections. 

\begin{remark}[The abelian situation]
		We first explain what the abelian analogue of the following construction is. Suppose \(G_{\bullet}\) is an increasing filtration on a vector bundle \(\mathcal{E}\) endowed with a connection \(\nabla\) such that \(\nabla\) preserves \(G_i\) and such that \(\gr^G(\mathcal{E})\) has \(p\)-curvature zero. In this situation, \(\psi_p\), which of course sends \(G_i\) to \(G_i,\) actually does a little better: it even sends \(G_i\) to \(G_{i+1}.\) Thus we can view \(p\)-curvature as a graded map \(\mathcal{E} \to F_{S/k}^*\Omega^1_{S'/k}(-1) \otimes \mathcal{E},\) with the twist by \(-1\) representing that the grading gets shifted.
\end{remark}
\begin{construction}[Conjugate-filtered \(p\)-curvature] \label{const802}
		Fix a conjugate-filtered non-abelian connection. Base changing to \(B\G_m,\) we get a Cartesian diagram
	\begin{center}
				\begin{tikzcd}
						\gr(E) \ar[r] \ar[d] & \gr(\tilde{E}) \ar[d] \\
						S \times B\G_m \ar[r] & S^{\Hodge, c}.
				\end{tikzcd}
		\end{center}
		We call from \autoref{rem689} that there is a natural map \(\nu : S^{\Hodge, c} \to S' \times B\G_m,\) and the composition \(S \times B\G_m \to S^{\Hodge, c} \to S' \times B\G_m\) is just \((F_{S'/k}, \id_{B\G_m}).\) 

		Recall from \autoref{prop750} that \(S^{\Hodge, c} \to S' \times B\G_m\) is actually a canonically split gerbe. Call the splitting \(s : S' \times B\G_m \to S^{\Hodge, c}.\) Composing this with the map \(S \times B\G_m \to S^{\Hodge, c},\) we get a Cartesian diagram
		\begin{center}
				\begin{tikzcd}
						\gr(E) \ar[r] \ar[d, "\phi"] & \gr(\tilde{E}) \times_{S^{\Hodge, c}} (S' \times B\G_m) \ar[r] \ar[d, "\phi_1"] & \gr(\tilde{E}) \ar[d] \\
						S \times B\G_m \ar[r, "{(F_{S/k}, \id)}"] & S' \times B\G_m \ar[r] \ar[rd, "\id"] & S^{\Hodge, c} \ar[d] \\ 
																  & & S' \times B\G_m
				\end{tikzcd}
		\end{center}
		
		Applying \autoref{const205} to the right square of this diagram, setting \(\gr(E)' := \gr(\tilde{E}) \times_{S^{\Hodge, c}} (S'\times B\G_m),\) we get a map 
		\[\alpha : \mathbb{V}(\phi_1^*T_{S'/k}(+1)) \to \mathcal{T}_{\gr(E)'/(S' \times B\G_m)}.\]

		We then define 
		\[\psi_p : \mathbb{V}(\phi^*F_{S/k}^*T_{S'/k}(+1)) \to \mathcal{T}_{\gr(E)/(S \times B\G_m)}\]
		as the base change of \(\alpha\) along \((F_{S/k}, \id) : S \times B\G_m \to S' \times B\G_m.\) 
\end{construction}

\section{Katz's formula} \label{katz}

Finally, we can now prove Katz's formula. 

\subsection{The isomonodromy foliation}

Fix \(k\) a field of characteristic \(p,\) and \(S\) a smooth \(k\)-scheme. Fix \(f : X \to S\) a smooth proper morphism. 

\begin{warn}
		Throughout this section, we will write \(S^{\hat{\dR}}\) to denote \((S/k)^{\hat{\dR}},\) as everything will be done over \(k.\) 
\end{warn}

Recall from \autoref{defn65} that a non-abelian connection on \(S\) is a stack over \(S^{\hat{\dR}}.\) In the introduction, we defined
\[\mathcal{M}_{\dR}(X/S, n) : \Sch_S \to \Gpd\]
as the functor sending an \(S\)-scheme \(T\) to the groupoid of triples \((\mathcal{E}, \nabla, \xi)\) where
\begin{enumerate}
		\item \(\mathcal{E}\) is a rank \(n\) vector bundle on \(X\times_S T,\) 
		\item \(\nabla : \mathcal{E} \to \Omega^1_{(X\times_S T)/T}\otimes\mathcal{E}\) is a flat connection,
		\item \(\xi : \det(\mathcal{E}) \heq \OO_{X\times_S T}\) is an equivalence.
\end{enumerate} 

This moduli stack \(\mathcal{M}_{\dR}(X/S, n)\) of vector bundles with flat connection (which also appears in Lam--Litt \cite{lamlitt}) will be the underlying object of the non-abelian Gauss--Manin connection.

We asserted in the introduction that this moduli stack was also a certain mapping stack; we now check this assertion. 
\begin{lemma}
		The functor \(\mathcal{M}_{\dR}(X/S, n)\) coincides with the functor of points of the functor
		\[\Map_S((X/S)^{\hat{\dR}}, B\SL_n) : \Sch_S \to \Gpd\]
		sending an \(S\)-scheme \(T\) to the groupoid of \(\SL_n\)-bundles on \((X/S)^{\hat{\dR}} \times_S T.\) 
\end{lemma}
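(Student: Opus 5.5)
The plan is to reduce the statement to the fact, recorded in \autoref{sec2} as property (2), that vector bundles on a sheared de Rham stack are vector bundles with flat connection. The new inputs are that this identification is compatible with base change and with the relative setting, and that it carries the determinant condition along.

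First, for an \(S\)-scheme \(T\), I identify \((X/S)^{\hat{\dR}} \times_S T\) with \((X_T/T)^{\hat{\dR}}\), where \(X_T := X \times_S T\). By \autoref{rem139}, \((X/S)^{\hat{\dR}} = X^{\hat{\dR}} \times_{S^{\hat{\dR}}} S\). Since transmutation \(Y \mapsto Y(\G_a^{\hat{\dR}}(-))\) commutes with finite limits, we have \(X_T^{\hat{\dR}} = X^{\hat{\dR}} \times_{S^{\hat{\dR}}} T^{\hat{\dR}}\). Therefore
\[(X_T/T)^{\hat{\dR}} = X^{\hat{\dR}}\times_{S^{\hat{\dR}}} T^{\hat{\dR}} \times_{T^{\hat{\dR}}} T = X^{\hat{\dR}} \times_{S^{\hat{\dR}}} T = (X/S)^{\hat{\dR}}\times_S T.\]
Strictly, \(T\) need not be smooth over \(k\). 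I therefore take the fiber product formula itself as the definition of the relative stack, and work with \(X_T \to T\), which is smooth.

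Second, I prove the relative version of property (2): rank \(n\) vector bundles on \((X_T/T)^{\hat{\dR}}\) are rank \(n\) vector bundles on \(X_T\) with flat \(T\)-relative connection. I would argue étale-locally on \(X_T\), where \(X_T\) is étale over \(\A^d_T\). For \(\A^d_T\), the relative de Rham stack is the quotient of \(\A^d_T\) by \((\G_a^{\#,\wedge})^d\) acting by translation, since the fiber over \(T\) kills the base directions. The coordinatewise argument of \autoref{art217} then identifies equivariant structures with commuting \(\nabla_{\partial/\partial x_i}\), which is exactly a flat relative connection. Étale maps pull back along de Rham stacks Cartesianly, and \(\Qcoh\) satisfies descent, so the local statements glue. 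Both sides are stacks in \(T\), so the identification is functorial in \(T\).

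Third, an \(\SL_n\)-bundle is a rank \(n\) vector bundle \(\mathcal{E}\) together with a trivialization of \(\det \mathcal{E}\). Under the equivalence, \(\det\) on the stack corresponds to \((\det\mathcal{E}, \det\nabla)\), because the equivalence is symmetric monoidal: tensor products of equivariant objects correspond to the Leibniz tensor connection. The trivial bundle \(\OO\) corresponds to \((\OO, d)\). So trivializations of the determinant on the de Rham stack are exactly the horizontal isomorphisms \(\xi\).

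I expect the main obstacle to be the second step, namely making the relative identification of quasicoherent sheaves rigorous in the sheared setting, where the equivalence is only cited from \cite{bkmvz}. I would first check that the sheafification subtleties (semiperfect test algebras) do not interfere with étale descent.
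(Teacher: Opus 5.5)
Your proposal is correct and is essentially the paper's argument. The paper likewise identifies \((X/S)^{\hat{\dR}}\times_S T\) with \((X\times_S T/T)^{\hat{\dR}}\) by commuting the de Rham construction with the fiber product; your subscript \(\times_{S^{\hat{\dR}}}\) is the correct one where the paper's display writes \(\times_S\). It then reads off \(\SL_n\)-bundles on the relative sheared de Rham stack as flat relative bundles with a horizontal determinant trivialization.

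The paper simply asserts that last identification. So your \'etale-local reduction to \(\A^d_T\) via \autoref{art217} and your monoidality argument for \(\det\) are added detail, not a different route.
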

\begin{proof}
		Fix \(T\) an \(S\)-scheme. By definition, a \(T\)-point of the mapping stack is just a map
		\[(X/S)^{\hat{\dR}} \times_S T \to B\SL_n.\] 
		But note (recalling the language of relative sheared de Rham stacks from \autoref{rem139}, and especially \autoref{eq149})
		\begin{align*}
				(X\times_S T/T)^{\hat{\dR}} &= (X \times_S T)^{\dR} \times_{T^{\dR}} T \\
											&= (X^{\dR} \times_S T^{\dR}) \times_{T^{\dR}} T  \\
											&= X^{\dR} \times_S T.
		\end{align*}

		In particular, a map \((X/S)^{\hat{\dR}} \times_S T \to B\SL_n\) is the same as an \(\SL_n\)-bundle on \((X\times_S T/T)^{\hat{\dR}}\); but \(\SL_n\)-bundles on this relative de Rham stack are just rank \(n\) bundles on \(X\times_S T\) with a flat connection relative to \(T\) and with an isomorphism \(\det(\mathcal{E}) \heq \OO\) compatible with connections (because it is really an isomorphism in \(\Qcoh((X/S)^{\hat{\dR}})\)), exactly coinciding with the functor of points of \(\mathcal{M}_{\dR}(X/S, n),\) as desired.
\end{proof}

\begin{defn} \label{defn850}
		The \emph{non-abelian Gauss--Manin connection} is the non-abelian connection furnished by the Cartesian diagram
\begin{center}
		\begin{tikzcd}
				\mathcal{M}_{\dR}(X/S, n) \ar[r] \ar[d] & \Map_{S^{\hat{\dR}}}(X^{\hat{\dR}}, B\SL_n) \ar[d] \\
				S \ar[r] & S^{\hat{\dR}}. 
		\end{tikzcd}
\end{center}
\end{defn}

\begin{remark} \label{rem272} 
		Here, by \(\Map_{S^{\hat{\dR}}}(X^{\hat{\dR}}, B\SL_n),\) we mean the prestack \(\Alg_k \to \Gpd\) where \(\Spec(A)\) is sent to the groupoid of pairs \((p, \mathcal{E})\) where \(p\) is an object of the groupoid \(S^{\hat{\dR}}(A),\) and \(\mathcal{E}\) is an \(\SL_n\)-bundle on the base change of \((X/k)^{\hat{\dR}}\) along \(p : \Spec(A) \to S^{\hat{\dR}}.\)

		An isomorphism \((p, \mathcal{E}) \heq (p', \mathcal{E}')\) in this groupoid is just the data of an isomorphism \(\alpha : p \heq p'\) in \(S^{\hat{\dR}}(A),\) together with an isomorphism \(\beta : \mathcal{E} \heq \tilde{\alpha}^*(\mathcal{E}'),\) where \(\tilde{\alpha} : p^*(X/k)^{\hat{\dR}} \to (p')^*(X/k)^{\hat{\dR}}\) is the isomorphism of stacks induced by \(\alpha.\) 
\end{remark}
\begin{remark}
		We will not use any fpqc descent properties of this mapping prestack, so we do not verify it satisfies any. It seems checking this would be a bit subtle, as \((X/k)^{\hat{\dR}}\) is in general not an algebraic stack, so existing results in the literature on descent properties of mapping prestacks do not seem to apply.
\end{remark}

\subsection{The Hodge filtration} 

\begin{defn} \label{defn871} 
		The \emph{Hodge filtration} on the non-abelian Gauss--Manin connection is the Griffiths-transverse filtration (in the sense of \autoref{defn568}) furnished by the Cartesian diagram. 
\begin{center}
		\begin{tikzcd}
				\Map_{S\times\A^1/\G_m}((X/S)^{\hat{\dR}, +}, B\SL_n) \ar[r] \ar[d] & \Map_{S^{\hat{\dR}, +}}(X^{\hat{\dR}, +}, B\SL_n) \ar[d] \\
				S \times \A^1/\G_m	 \ar[r]	& S^{\hat{\dR}, +}
		\end{tikzcd}
\end{center}
\end{defn}

\begin{defn}
		We define the \emph{Dolbeault stack} as the \(S\times B\G_m\)-stack
		\[\mathcal{M}_{\Dol} := (S\times B\G_m) \times_{S \times (\A^1/\G_m)} \Map_{S\times\A^1/\G_m}((X/S)^{\hat{\dR}, +}, B\SL_n) \heq \Map_{S\times B\G_m}((X/S)^{\Hodge}, B\SL_n).\]
		We write \(\pi : \mathcal{M}_{\Dol} \to S\times B\G_m\) denote the projection. 
\end{defn}
\begin{remark} \label{rem886} 
		We can describe \(\mathcal{M}_{\Dol}\) a little more concretely. As \((X/S)^{\Hodge}\) is a split gerbe over \(X\times B\G_m\) (by \autoref{prop555}), a vector bundle on \((X/S)^{\Hodge}\) is the same as a vector bundle on \(X\times B\G_m\) -- equivalently, a graded vector bundle \(\mathcal{E} = \bigoplus_n \mathcal{E}_n\) on \(X\) -- together with a flat map
		\[\theta : \mathcal{E} \to \Omega^1_{X/S}(+1) \otimes \mathcal{E},\]
		where the \(+1\) shift can be interpreted as saying \(\theta\) sends \(\mathcal{E}_n\) to \(\mathcal{E}_{n-1}.\) 

		Thus a point of \(\mathcal{M}_{\Dol}\) can be thought of as a triple \((\mathcal{E}, \theta, \xi),\) where \(\mathcal{E}, \theta\) are as above, and \(\xi\) is an isomorphism \(\det(\mathcal{E}) \heq \OO\) intertwining \(\det(\theta)\) with zero.
\end{remark}

Applying \autoref{higgsfields} (our construction of non-abelian Higgs fields) to the Hodge filtration (\autoref{defn871}), we produce a morphism 
\begin{equation} \label{eq935} \Theta_{X/S} : \mathbb{V}(\pi^*T_{S/k}(-1)) \to \mathcal{T}_{\mathcal{M}_{\Dol}/(S\times B\G_m)}.\end{equation} 
of \(\mathcal{M}_{\Dol}\)-prestacks.

\subsection{The conjugate filtration}

\begin{defn} \label{defn904} 
		The conjugate filtration on the non-abelian Gauss--Manin connection is the conjugate-filtered connection furnished by the Cartesian diagram
		\begin{center}
		\begin{tikzcd}
				\Map_{S\times\A^1/\G_m}((X/S)^{\hat{\dR}, c}, B\SL_n) \ar[r] \ar[d] & \Map_{S^{\hat{\dR}, c}}(X^{\hat{\dR}, c}, B\SL_n) \ar[d] \\
				S \times \A^1/\G_m	 \ar[r]	& S^{\hat{\dR}, c}.
		\end{tikzcd}
\end{center}
\end{defn} 

Set
\[\mathcal{M}_{\Dol, c} := \Map_{S\times B\G_m}((X/S)^{\Hodge, c}, B\SL_n).\]
the associated graded of the conjugate filtration, with \(\pi_c : \mathcal{M}_{\Dol, c} \to S\times B\G_m\) its structure map. Applying \autoref{const802} to the conjugate filtration (\autoref{defn904}), we get a morphism
\begin{equation} \label{eq950} \psi_{X/S}|_{\lambda = 0} : \mathbb{V}(\pi_c^*F_{S/k}^*T_{S'/k}(+1)) \to \mathcal{T}_{\mathcal{M}_{\Dol, c}/(S\times B\G_m)}.\end{equation} 

\subsection{Identifying the Hodge points}

We are finally ready to state and prove the main theorem.
\begin{theorem}[The non-abelian Katz's formula]
		There is a Cartesian diagram of stacks
		\begin{center}
				\begin{tikzcd}[column sep=large]
						\mathcal{M}_{\Dol, c} \ar[r, "\phi"] \ar[d, "\pi_c"] & \mathcal{M}_{\Dol} \ar[d, "\pi"] \\
						S \times B\G_m \ar[r, "{(F_{\abs}, [-1])}"] & S \times B\G_m,
				\end{tikzcd}
		\end{center}
		and for
		\[\Theta_{X/S} : \mathbb{V}(\pi^*T_{S/k}(-1)) \to \mathcal{T}_{\mathcal{M}_{\Dol}/(S\times B\G_m)}\] 
		as defined in \autoref{eq935} and 
		\[\psi_{X/S}|_{\lambda = 0} : \mathbb{V}(\pi_c^*F_{S/k}^*T_{S'/k}(+1)) \to \mathcal{T}_{\mathcal{M}_{\Dol, c}/(S\times B\G_m)}\]
		as defined in \autoref{eq950}, we have
		\[\phi^*\Theta_{X/S} = \psi_{X/S}|_{\lambda = 0}.\] 
\end{theorem}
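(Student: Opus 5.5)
The proof will be carried out entirely at the level of ring stacks and then pushed through transmutation. The underlying observation is the one sketched in \autoref{secsyn}: the associated graded of the Hodge filtration and the associated graded of the conjugate filtration agree up to a Frobenius twist and a sign on the grading. Concretely, the first step is to construct an isomorphism of ring stacks over \(B\G_m\) comparing \(\G_a^{\hat{\dR},c}|_{\lambda=0}\) with the pullback of \(\G_a^{\hat{\dR},+}|_{B\G_m}\) along \([-1]\) and Frobenius. From the proofs of \autoref{prop555} and \autoref{prop750} we have
\[\G_a^{\hat{\dR},+}|_{B\G_m} \heq \G_a \oplus B\G_a^{\#,\wedge}(-1), \qquad \G_a^{\hat{\dR},c}|_{\lambda=0} \heq F_*\G_a \oplus B(F_*\G_a^{\#,\wedge}(+1)).\]
Pulling the first of these back along \([-1]:B\G_m\to B\G_m\) turns the twist \((-1)\) into \((+1)\). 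Pulling back along Frobenius replaces \(\G_a\)-modules \(M\) by \(F_*M\). Both decompositions are split square-zero extensions, so the resulting map of ring stacks is compatible with the splittings.

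The second step is to transmute this identification. Because transmutation is functorial in the ring stack, the comparison yields, for any smooth \(k\)-scheme \(Y\), a compatible identification of the split gerbes \(Y^{\Hodge}\) and \(Y^{\Hodge,c}\) over the appropriate bases after the Frobenius and \([-1]\) base change. We apply this with \(Y=X\) and with \(Y=S\). Forming the relative objects and taking \(\Map(-,B\SL_n)\) then gives \(\phi\) and the Cartesian square in the statement. Concretely, \((X/S)^{\Hodge,c}\) should be identified with the base change of \((X/S)^{\Hodge}\) along \((F_{\abs},[-1])\), and mapping stacks into \(B\SL_n\) commute with such base change.

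The third step compares the two constructions. \(\Theta_{X/S}\) comes from \autoref{const205} applied to the \(B\mathbb{V}(T_{S/k}(-1))\)-gerbe action on \(\Map_{S^{\Hodge}}(X^{\Hodge},B\SL_n)\). \(\psi_{X/S}|_{\lambda=0}\) comes from \autoref{const205} applied to the analogous action over \(S'\times B\G_m\), followed by Frobenius base change, as in \autoref{const802}. Constructions \ref{const638} and \ref{const205} are manifestly compatible with base change of the base and with isomorphisms of gerbes that respect the splittings. So \(\phi^*\Theta_{X/S}=\psi_{X/S}|_{\lambda=0}\) should follow formally, once we know that the ring-stack isomorphism of the first step carries the canonical splitting \(s\) used in \autoref{const802} to the splitting used in \autoref{higgsfields}, and that it intertwines the \(\mathbb{V}^{\#,\wedge}\)-actions. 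On the \(X\)-side, the gerbe acting on \(X^{\Hodge,c}\) comes from \(S'\), which is why \(\psi\) is first constructed over \(S'\). We will need that \(X^{\Hodge,c}\times_{S^{\Hodge,c}}(S'\times B\G_m)\) matches the Frobenius pullback of the \(X^{\Hodge}\)-side.

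The main obstacle is the first step, namely checking carefully that the identification of \(G_u|_{\lambda=0}\) with \(\alpha_p\oplus F_*\G_a^{\#,\wedge}(+1)\), together with the quotient \(\G_a/\alpha_p\heq F_*\G_a\) from \autoref{lem336}, is an isomorphism of ring stacks, and not merely of \(\G_a\)-modules, with the Frobenius-and-\([-1]\)-pulled-back Hodge ring stack. This includes checking that the induced map on the \(B(\cdot)\) parts is the identity of \(F_*\G_a^{\#,\wedge}(+1)\) rather than some automorphism, since an automorphism would change \(\Theta\) by a scalar. I would verify this on semiperfect test algebras using Drinfeld's cone description. The sign is the delicate point: it is absorbed by \([-1]\), because \(\OO(-1)\) pulls back to \(\OO(+1)\), so no additional \(-1\) should appear. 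I would confirm this by checking it explicitly in the case \(S=\A^1\).
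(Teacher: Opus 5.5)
Your proposal is correct and is essentially the paper's argument. You identify the $(\tau,[-1])$-pullback of the Hodge data ($S^{\Hodge}$, $X^{\Hodge}$, and the Cartesian square defining $\Theta_{X/S}$) with the conjugate data over $S'\times B\G_m$, so that $(\tau,[-1])^*\Theta_{X/S}=\alpha$ and hence $\psi_{X/S}|_{\lambda=0}=(F_{S/k},\id)^*\alpha=(F_{\abs},[-1])^*\Theta_{X/S}$.

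The only difference is where the identification is made. The paper writes down $(\tau,[-1])^*S^{\Hodge}\cong S^{\Hodge,c}$ and $(\tau,[-1])^*X^{\Hodge}\cong X^{\Hodge,c}$ directly from the split-gerbe descriptions in Propositions \ref{prop555} and \ref{prop750}, so the $B(\cdot)$-component is the identity by construction and the stray automorphism you worry about cannot arise. You instead obtain these identifications by transmuting one ring-stack isomorphism, which has the small advantage of making compatibility with $X\to S$ and with the canonical sections automatic.
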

\begin{proof}
		Consider the Cartesian diagram
		\begin{center}
				\begin{tikzcd}
						S' \ar[r, "\tau"] \ar[d] & S \ar[d] \\
						\Spec(k) \ar[r, "\Frob"] & \Spec(k)
				\end{tikzcd}
		\end{center}
		defining the relative Frobenius twist; in particular, we let \(\tau : S' \to S\) denote the base change of Frobenius. We can construct an equivalence
		\begin{equation} \label{eq996} (\tau, [-1])^*S^{\Hodge} \heq S^{\Hodge, c}\end{equation}
		as the composition
		\begin{align*}
				(\tau, [-1])^*S^{\Hodge} &\heq (\tau, [-1])^*B_{S\times \G_m}(T_{S/k}(-1)) \\
										 &\heq B_{S'\times B\G_m}(\tau^*T_{S/k}(+1)) \\
										 &\heq B_{S'\times B\G_m}(T_{S'/k}(+1)) \\
										 &\heq S^{\Hodge, c}. 
		\end{align*}

		We can similarly compute that the base change product \((\tau, [-1])^*X^{\Hodge}\) is just
		\begin{equation} \label{eq1013} (\tau, [-1])^*X^{\Hodge} \heq X^{\Hodge, c},\end{equation}
		as
		\begin{align*}
				(\tau, [-1])^*X^{\Hodge} &\heq (\tau, [-1])^*B_{X\times B\G_m}(T_{X/k}(-1))^{\#, \wedge} \\
										 &\heq B_{X' \times B\G_m}(T_{X'/k}(+1))^{\#, \wedge} \\
										 &\heq X^{\Hodge, c}.
		\end{align*}

		Since \((X/S)^{\Hodge} = X^{\Hodge} \times_{S^{\Hodge}} (S \times B\G_m)\) and \((X/S)^{\Hodge, c} = X^{\Hodge, c} \times_{S^{\Hodge, c}} (S \times B\G_m),\) we deduce from \autoref{eq996} and \autoref{eq1013} that the base change, along \((\tau, [-1]),\) of the Cartesian diagram
		\begin{center}
		\begin{tikzcd}
				\Map_{S\times B\G_m}((X/S)^{\Hodge}, B\SL_n) \ar[r] \ar[d] & \Map_{S^{\Hodge}}(X^{\Hodge}, B\SL_n) \ar[d] \\
				S \times B\G_m	 \ar[r]	& S^{\Hodge}
		\end{tikzcd}
		\end{center}
		of \(S\times B\G_m\)-stacks (used to define \(\Theta_{X/S}\)) is just the Cartesian diagram
		\begin{center}
		\begin{tikzcd}
				\Map_{S'\times B\G_m}((X/S)^{\Hodge, c}, B\SL_n) \ar[r] \ar[d] & \Map_{S^{\Hodge, c}}(X^{\Hodge, c}, B\SL_n) \ar[d] \\
				S' \times B\G_m	 \ar[r]	& S^{\Hodge, c}.
		\end{tikzcd}
		\end{center}

		It follows immediately that \((\tau, [-1])^*\Theta_{X/S}\) is the map \(\alpha\) used as an intermediate step in \autoref{const802} to construct \(p\)-curvature; thus 
		\[\psi_{X/S}|_{\lambda =0} = (F_{S/k}, \id)^*\alpha = (F_{S/k}, \id)^*(\tau, [-1])^*\Theta_{X/S} = (F_{\abs}, [-1])^*\Theta_{X/S},\]
		as desired.
\end{proof}

\bibliographystyle{plain}
\bibliography{refs} 

\end{document}